\newcommand{\beq}{\begin{equation}}
\newcommand{\eeq}{\end{equation}}
\newcommand{\beqs}{\begin{equation*}}
\newcommand{\eeqs}{\end{equation*}}
\newcommand{\ba}{\begin{array}}
\newcommand{\ea}{\end{array}}
\newcommand{\beas}{\begin{eqnarray*}}
\newcommand{\eeas}{\end{eqnarray*}}
\newcommand{\bea}{\begin{eqnarray}}
\newcommand{\eea}{\end{eqnarray}}
\newcommand{\bal}{\begin{align}}
\newcommand{\eal}{\end{align}}
\newcommand{\bals}{\begin{align*}}
\newcommand{\eals}{\end{align*}}
\newcommand{\al}{\alpha}
\newcommand{\R}{\ensuremath{\mathbb R}}
\newcommand{\C}{\ensuremath{\mathbb C}}
\newcommand{\N}{\ensuremath{\mathbb N}}
\newcommand{\Z}{\ensuremath{\mathbb Z}}
\newcommand{\inprod}[1]{\langle{#1}\rangle}
\newcommand{\doubleinprod}[1]{\langle\!\langle{#1}\rangle\!\rangle}
\newcommand{\bds}{\begin{displaystyle}}
\newcommand{\eds}{\end{displaystyle}}
\def\eqdef{\stackrel{\rm def}{=}}
\newcommand{\bvec}[1]{\mathbf{#1}}
\def\vecx{\bvec x}
\def\vecb{\bvec b}
\def\vece{\bvec e}
\def\vecu{\bvec u}
\def\vecv{\bvec v}
\def\vecx{\bvec x}
\def\vecw{\bvec w}
\def\veck{\bvec k}
\def\vecl{\bvec l}
\def\vecm{\bvec m}
\def\varep{\varepsilon}
\def\ddt{\frac{d}{dt}}
\newtheorem{theorem}{Theorem}[section]
\newtheorem{lemma}[theorem]{Lemma}
\newtheorem{corollary}[theorem]{Corollary}
\newtheorem{proposition}[theorem]{Proposition}
\newtheorem{definition}[theorem]{Definition}
\newtheorem*{assumption}{Basic Assumption}
\newtheorem*{notation}{Notation}
\theoremstyle{remark}
\newtheorem{remark}[theorem]{\bf{Remark}}
\def\mD{\mathcal D}
\numberwithin{equation}{section}
\title{\textbf{Asymptotic expansion for solutions of the Navier-Stokes equations with non-potential body forces}}
\author{Luan T. Hoang$^{1}$ and Vincent R. Martinez$^{2}$}
\date{\today}
\begin{document}
\maketitle

\begin{center}
$^{1}$Department of Mathematics and Statistics, Texas Tech University\\
Box 41042, Lubbock, TX 79409-1042, U.S.A.\\
Email address: \texttt{luan.hoang@ttu.edu}

\medskip
$^{2}$Mathematics Department, Tulane University\\
6823 St. Charles Ave, New Orleans, LA 70118, U.S.A.\\
Email address: \texttt{vmartin6@tulane.edu}
\end{center}

\begin{abstract}
We study the  long-time behavior of spatially periodic solutions of the Navier-Stokes equations in the three-dimensional space.
The body force is assumed to possess an asymptotic expansion or, resp., finite asymptotic approximation, in Sobolev-Gevrey spaces, as time tends to infinity,  in terms of polynomial and decaying exponential functions of time.
We establish an asymptotic expansion, or resp., finite asymptotic approximation, of the same type for the  Leray-Hopf weak solutions.
This extends previous results that were obtained in the case of potential forces, to the non-potential force case, where the body force may have different levels of regularity and asymptotic approximation.  
This expansion or approximation, in fact, reveals  precisely how the structure of the force influences the asymptotic behavior of the solutions.
\end{abstract}
\section{Introduction}\label{intro}
We study the  Navier-Stokes equations (NSE) for a viscous, incompressible fluid in the three-dimensional space, $\R^3$.
Let $\vecx\in \R^3$ and $t\in\R$  denote the space and time variables, respectively.
Let the (kinematic) viscosity be denoted by $\nu>0$, the  velocity vector field by $\vecu(\vecx,t)\in\R^3$, the pressure by $p(\vecx,t)\in\R$, and the body force by  $\mathbf f(\vecx,t)\in\R^3$. The NSE which describe the fluid's dynamics are given by
\begin{align}\label{nse}
\begin{split}
&\bds \frac{\partial \vecu}{\partial t}\eds  + (\vecu\cdot\nabla)\vecu -\nu\Delta \vecu = -\nabla p+\mathbf f \quad\text{on }\R^3\times(0,\infty),\\
&\textrm{div } \vecu = 0  \quad\text{on }\R^3\times(0,\infty).
\end{split}
\end{align}

The initial condition is 
\beq\label{ini}
\vecu(\vecx,0) = \vecu^0(\vecx),
\eeq 
where  $\vecu^0(\vecx)$ is a given divergence-free vector field.

In this paper, we focus on solutions $\vecu(\vecx, t)$ and $p(\vecx, t)$ which are $L$-periodic for some $L>0$.
Here, a function $g(\vecx)$ is $L$-periodic if
\beqs
g(\vecx+L\vece_j)=g(\vecx)\quad \textrm{for all}\quad \vecx\in \R^3,\ j=1,2,3,\eeqs
where  $\{\vece_1,\vece_2,\vece_3\}$ is the standard basis of $\R^3$.

By the remarkable Galilean transformation, we can assume also that $\vecu(\vecx,t)$, for all $t\ge 0$,  has zero average over the domain $\Omega=(-L/2, L/2)^3$.
A function $g(\vecx)$ is said to have zero average over $\Omega$ if 
\beq \label{Zacond} 
\int_\Omega g(\vecx)d\vecx=0.
\eeq

By rescaling the spatial and time variables, we assume throughout, without loss of generality, that  $L=2\pi$ and $\nu =1$.

\begin{notation} We will use the following standard notation.
\begin{enumerate}[label={\rm (\alph*)}]
 \item In studying dynamical systems in infinite dimensional spaces, we denote, regarding \eqref{nse} and \eqref{ini},  $u(t)=\vecu(\cdot,t)$, $f(t)=\mathbf f(\cdot,t)$, and $u^0=\vecu^0(\cdot)$. 
 \item For non-negative functions $h(t)$ and $g(t)$, we write 
	\beqs
		h(t)=\mathcal{O}(g(t))\ \text{as $t\to\infty$}\quad \text{if there exist $T,C>0$ such that }\ h(t)\leq Cg(t),\ \forall t>T.
	\eeqs
 \item The Sobolev spaces on $\Omega$ are denoted by $H^m(\Omega)$ for $m=0,1,2,\ldots$, each consists of functions on $\Omega$ with distributional derivatives up to order $m$ belonging to $L^2(\Omega)$.
\end{enumerate}
\end{notation}

The type of asymptotic expansion that we study here is defined, in a general setting, as follows.

\begin{definition}\label{expanddef}
Let  $X$ be a real vector space.

{\rm (a)} An $X$-valued polynomial is a function $t\in \R\mapsto \sum_{n=1}^d a_n t^n$,
for some $d\ge 0$, and $a_n$'s belonging to $X$.

{\rm (b)} When $(X,\|\cdot\|)$ is a normed space, a function $g(t)$ from $(0,\infty)$ to $X$ is said to have the asymptotic expansion
	\beq \label{gexpand}
g(t) \sim \sum_{n=1}^\infty g_n(t)e^{-nt} \text{ in } X,
\eeq
where $g_n(t)$'s are $X$-valued polynomials, if for all $N\geq1$, there exists $\varep_N>0$ such that
\beq \label{grem}
\Big\|g(t)- \sum_{n=1}^N g_n(t)e^{-nt}\Big\|=\mathcal O(e^{-(N+\varep_N)t})\ \text{as }t\to\infty.
\eeq
\end{definition}

This article aims at studying the asymptotic behavior of the solution $\vecu(\vecx,t)$ as $t\to\infty$ for a certain class of forces $\mathbf f(\vecx,t)$.
The case when $\mathbf f$ is a potential force, i.e., $\mathbf f(\vecx,t)=-\nabla \phi(\vecx,t)$, for some scalar function $\phi$, has been well-studied.  For instance, it is well-known that any Leray-Hopf weak solution becomes regular eventually and decays in the $H^1(\Omega)$-norm exponentially.
For more precise asymptotic behavior, Dyer and Edmunds \cite{DE1968} proved that a non-trivial, regular solution is also bounded below by an exponential function.
Foias and Saut \cite{FS84a} then proved that in bounded or periodic domains, a non-trivial,  regular solution decays exponentially at an exact rate which is an eigenvalue of the Stokes operator. 
Furthermore,  they established in  \cite{FS87} that  for such a  solution $u(t)$, the following asymptotic expansion holds, in the sense of Definition \ref{expanddef}, in Sobolev spaces $H^m(\Omega)^3$, for all $m\ge 0$:
\beq \label{expand}
u(t) \sim \sum_{n=1}^\infty q_n(t)e^{-nt},
\eeq
where $q_n(t)$'s are  unique polynomials in $t$ 
with trigonometric polynomial values.  


Recently, in \cite{HM1}, it was shown by the authors that the expansion in fact holds in Gevrey spaces.
More precisely, for any $\sigma>0$  and $m\in\N$, there exists an $\varep_N>0$, for each integer $N>0$, such that
\beq\label{vHM}
\Big\|e^{\sigma (-\Delta)^{1/2}} \Big(u(t)- \sum_{n=1}^N q_n(t)e^{-nt}\Big)\Big\|_{H^m(\Omega)^3}=\mathcal O\big(e^{-(N+\varepsilon_N)t}\big)\textrm{ as }t\to\infty.
\eeq
Note that the (Gevrey) norm in estimate \eqref{vHM} is much stronger than the standard Sobolev norm in $H^m(\Omega)$.
More importantly, the simplified approach in \cite{HM1} allows the proof to be applied to wider classes of equations; that approach will be adopted in this paper.

Regarding the case of potential forces, the interested reader is referred to \cite{FS83,FS84a,FS84b,FS87,FS91} for deeper studies on the asymptotic expansion, its associated normalization map, and invariant nonlinear manifolds; for the associated (Poincar\'e-Dulac) normal form, see  \cite{FHOZ1,FHOZ2,FHS1}; for its applications to statistical solutions of the NSE, decaying turbulence, and analysis of helicity, see \cite{FHN1,FHN2}; for a result in the whole space $\mathbb R^3$, see  \cite{KukaDecay2011}.

\medskip
The main goal of this paper is to establish \eqref{expand} when $\mathbf f$ is \textit{not} a potential function.
To understand the result without calling for technical details, we state it here as a `meta theorem'; the rigorous version will be provided by Theorem \ref{mainthm}, whose proof will then be presented in section \ref{pfsec}.

\begin{theorem}[Meta Theorem]\label{meta}
Assume that the body force has an asymptotic expansion 
\beq\label{fexpand}
f(t)\sim \sum_{n=1}^\infty f_n(t)e^{-nt},
\eeq
in some appropriate functional space. 
Then any Leray-Hopf weak solution $u(t)$ of \eqref{nse} and \eqref{ini} admits an asymptotic expansion of the form \eqref{expand} in the same space.
\end{theorem}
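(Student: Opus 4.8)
The plan is to recast \eqref{nse} in its standard functional form $u'(t)+Au(t)+B(u(t),u(t))=\mathbb P f(t)$, where $A=-\Delta$ is the Stokes operator on the space of divergence-free, zero-average, $2\pi$-periodic vector fields (whose spectrum is $\{|k|^2:k\in\Z^3\setminus\{0\}\}\subset\N$), $B(u,v)=\mathbb P[(u\cdot\nabla)v]$, and $\mathbb P$ is the Leray projection. Since the expansion \eqref{fexpand} of the force begins with $f_1(t)e^{-t}$, one has $\|\mathbb P f(t)\|_X=\mathcal O(t^{\deg f_1}e^{-t})$ in the relevant Gevrey--Sobolev space $X$, and I would first invoke the standard facts that any Leray--Hopf weak solution becomes regular for all large $t$, eventually enters and remains in $X$, and satisfies $\|u(t)\|_X=\mathcal O(e^{-(1-\epsilon)t})$ for every $\epsilon\in(0,1)$. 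Fixing a time $t_0$ past which all of this holds and working on $[t_0,\infty)$, I would then prove by induction on $N\ge0$ the assertion $(\mathrm P_N)$: there exist $X$-valued polynomials $q_1,\dots,q_N$ and $\varepsilon_N\in(0,1)$ with $\bigl\|u(t)-\sum_{n=1}^N q_n(t)e^{-nt}\bigr\|_X=\mathcal O(e^{-(N+\varepsilon_N)t})$. The base case $(\mathrm P_0)$ is the decay estimate above, taken with $\varepsilon_0=3/4$ (any $\varepsilon_0>1/2$ will do).

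For the step $(\mathrm P_{N-1})\Rightarrow(\mathrm P_N)$, put $v:=u-\sum_{n=1}^{N-1}q_n e^{-nt}$, so that $\|v(t)\|_X=\mathcal O(e^{-(N-1+\varepsilon_{N-1})t})$. Substituting $u=v+\sum_{n<N}q_n e^{-nt}$ into the equation and demanding that the $q_n$ solve the linear ODEs $q_n'+(A-n)q_n=h_n$, with $h_n:=\mathbb P f_n-\sum_{k+\ell=n,\,k,\ell\ge1}B(q_k,q_\ell)$, all terms of order $e^{-nt}$ with $n<N$ cancel, leaving $v'+Av+L(t)v=h_N e^{-Nt}+R(t)$. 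Here $L(t)w:=B(\sum_{n<N}q_n e^{-nt},w)+B(w,\sum_{n<N}q_n e^{-nt})$ is a linear operator of size $\mathcal O(e^{-(1-\epsilon)t})$; $h_N$ is again an $X$-valued polynomial — I use \eqref{fexpand} out to order $N$ so that the tail of $\mathbb P f$ is $\mathcal O(e^{-(N+1)t})$, together with the bilinear estimates available in $X$, which keep $B$ of polynomials polynomial; and $R(t)$ collects that tail, the interaction terms $B(q_k,q_\ell)e^{-(k+\ell)t}$ with $k+\ell>N$, and $B(v,v)$. Since $\|B(v,v)\|_X=\mathcal O(e^{-2(N-1+\varepsilon_{N-1})t})$ and $2(N-1+\varepsilon_{N-1})>N$ (which is where $\varepsilon_0>1/2$ is needed when $N=1$, and is automatic for $N\ge2$), one gets $\|R(t)\|_X=\mathcal O(e^{-(N+\delta)t})$ for some $\delta>0$.

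I would then construct $q_N$ and close the estimate by decomposing along the eigenspaces $H_\lambda:=\ker(A-\lambda)$ of $A$. For each eigenvalue $\lambda\ne N$, let $(q_N)_\lambda$ be the unique $H_\lambda$-valued polynomial solving $(q_N)_\lambda'+(\lambda-N)(q_N)_\lambda=(h_N)_\lambda$ (solvable by degree reduction because $\lambda\ne N$). If $N$ is itself an eigenvalue of $A$, the variation-of-constants formula applied to the $H_N$-component of the $v$-equation shows $(v)_N(t)=(a_N+\mathcal P_N(t))e^{-Nt}+\mathcal O(e^{-(N+\delta)t})$, where $\mathcal P_N$ is a polynomial antiderivative of $(h_N)_N$ and $a_N\in H_N$ is a constant determined by $u$; set $(q_N)_N:=a_N+\mathcal P_N$. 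With $q_N$ so defined, $z:=v-q_N e^{-Nt}$ satisfies $z'+Az+L(t)z=\widetilde R(t)$ with $\|\widetilde R(t)\|_X=\mathcal O(e^{-(N+\delta)t})$, and still $\|z(t)\|_X=\mathcal O(e^{-(N-1+\varepsilon_{N-1})t})$. Writing $z=Pz+Qz$, with $P$ the spectral projection of $A$ onto eigenvalues $\le N$ (finite rank) and $Q$ that onto eigenvalues $\ge N+1$: the $H_N$-component of $Pz$ is $\mathcal O(e^{-(N+\delta)t})$ by the very choice of $(q_N)_N$; for $\lambda<N$, variation of constants gives $(z)_\lambda(t)=c_\lambda e^{-\lambda t}+\mathcal O(e^{-(N+\delta)t})$, and $c_\lambda=0$ is \emph{forced} since $\|(z)_\lambda(t)\|\le\|v(t)\|_X+\|q_N(t)e^{-Nt}\|_X=o(e^{-\lambda t})$ (using $\lambda\le N-1<N-1+\varepsilon_{N-1}$); and for $Qz$ a Gevrey energy estimate using $\|Qz\|_V^2\ge(N+1)\|Qz\|_X^2$ together with $\|L(t)z\|_X=\mathcal O(e^{-(N+\varepsilon_{N-1}-\epsilon)t})$ yields $\|Qz(t)\|_X=\mathcal O(e^{-(N+\delta')t})$ for some $\delta'>0$. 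Combining the three pieces gives $(\mathrm P_N)$ with $\varepsilon_N:=\min\{\delta,\delta',1/2\}$, completing the induction and hence the expansion \eqref{expand}.

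The conceptual crux — and the place where the non-potential case truly differs from the potential one (where $\mathbb P f\equiv0$ and each $q_n$ lies in $\ker(A-n)$) — is the construction of $q_N$: because $\mathbb P f_N\ne0$, the polynomial $q_N$ generically has nonzero components in \emph{every} eigenspace of $A$, so one must carry its full spectral content and, in particular, verify that the low modes $\lambda<N$ do not regenerate slowly decaying $e^{-\lambda t}$ behavior — which is exactly where the inductive decay rate $e^{-(N-1+\varepsilon_{N-1})t}$ from the previous step gets spent. I expect the main technical obstacle to be making this mode-by-mode decomposition rigorous in the Gevrey--Sobolev framework: one needs the appropriate bilinear estimates there, must keep careful account of the (possibly varying) regularity index dictated by each $f_n$ — since the nonlinear interactions only improve regularity relative to the current stage, the recursion should still close in the space prescribed by $f$ — and must control the infinitely many high modes uniformly, which leans on the parabolic smoothing built into these spaces.
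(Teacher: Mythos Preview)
Your proposal is correct and follows essentially the same strategy as the paper: induction on $N$, spectral decomposition along the eigenspaces of $A$, three cases ($\lambda<N$, $\lambda=N$, $\lambda>N$) handled via variation of constants, and the crucial observation that the inductive decay rate $e^{-(N-1+\varepsilon_{N-1})t}$ forces the low-mode constants $c_\lambda$ to vanish. Your identification of the ``conceptual crux'' matches exactly what the paper exploits through Lemma~\ref{polylem}(iii).

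There are two organizational differences worth noting. First, rather than keeping $L(t)z$ on the left and closing the high-mode piece by an energy estimate, the paper moves the terms $B(v_N,u)+B(\bar u_N,v_N)$ into the remainder $h_N$ on the right-hand side (see \eqref{hdef}); each $R_k$-component then satisfies a genuinely constant-coefficient scalar ODE, and the high modes are handled by applying Lemma~\ref{polylem}(i) mode by mode and summing, using $\sum_{k\ge N+2}(k-N-1-\varepsilon_*)^{-2}<\infty$. Second---and this is how the paper resolves the regularity-tracking issue you flag at the end---the induction hypothesis $(\mathcal H2)$ is carried for \emph{all} $\alpha\ge1/2$ simultaneously: at step $N+1$ one uses the estimate for $v_N$ in $G_{\alpha+1/2,\sigma_0}$ (from step $N$) to control $|B(v_N,u)|_{\alpha,\sigma_0}$ via Lemma~\ref{nonLem}, thereby closing the recursion at level $\alpha$. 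Your single-space energy argument can be made to work, but the paper's device of trading one half-derivative per inductive step is cleaner and sidesteps the need to absorb $L(t)z$ into the dissipation.
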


On the other hand, when $f(t)$ satisfies a finitary version of \eqref{fexpand}, then we obtain a corresponding finite asymptotic approximation result in Theorem \ref{finitetheo}.
More specifically, if the right-hand side of \eqref{fexpand} is a finite sum, then we show that the corresponding solution, $u(t)$, admits a finite sum approximation of the same type analogous to \eqref{expand}.
Theorem \ref{finitetheo} is also suitable for the case when the force is not smooth, but belongs only to a pre-specified Sobolev class.

Theorem \ref{meta} develops one of the possible avenues of extending the Foias-Saut theory \cite{FS87}  for the NSE to the  non-potential force case. The finite sum version, Theorem \ref{finitetheo}, which is a new feature only for the non-potential force case, allows asymptotic analysis of the solution even when the force has restricted regularity and only limited information on the force's asymptotic behavior is known.  Our analysis, moreover, explicitly indicates how each term in the expansion of the force integrates into the expansion of the solution. 
We emphasize that the assumptions that we do impose on the force are very natural. This point is not so obvious because if one directly adopts the argument of Foias-Saut in  \cite{FS87}, one would then be led to impose conditions on \textit{time-derivatives of all orders} on the force.  We, nevertheless, are able to avoid these conditions and ultimately establish the claimed expansion by applying the refined approach, in the case of periodic domains, initiated in \cite{HM1}. 
We also remark that we obtain a welcome technical improvement over \cite{HM1} in obtaining the expansion in Sobolev spaces, when Gevrey regularity is not available; it requires a more elaborate bootstrapping process which is carried out in Part II of the proof of Proposition \ref{theo23}.

The results obtained in this paper  are the first steps of the larger program of understanding the relation between the asymptotic expansion and the external body force.  On the other hand, in spite of assuming the force to have regular modes of decay, further understanding of the solution's resulting expansion and its consequences will shed insights into the nonlinear structure of the NSE, and decaying turbulence theory.
It is worth mentioning at this point that the global existence of regular solutions and the uniqueness of the global weak solutions of the 3D NSE remain outstanding open problems. 
However, our result details the asymptotic behavior of any Leray-Hopf weak solution, regardless of the resolution to either of these issues. 
Moreover, to the best of the authors' knowledge, there have been no numerical studies made to determine the polynomials $q_n(t)$ in \eqref{expand} as of yet. By extending the expansion to accommodate non-potential forces,  our result should facilitate the formulation and testing of possible numerical algorithms for their computation. Indeed, one can attempt to compute the expansion of \emph{explicit} solutions, particularly, those for which the nonlinear term in NSE does not vanish. These solutions are quite easy to generate when certain, specific forces are used at the onset, but harder to find 
when the projected force in the functional equation \eqref{fctnse} is \emph{given and fixed}, albeit zero, as in the case of potential forces (see \eqref{potential}).

For the structure of this paper, Section \ref{bkgmain} lays the necessary background for formulating the result. 
The main theorems are  Theorems \ref{mainthm} and \ref{finitetheo}, while Corollary \ref{Vcor} emphasizes the scenario of finite-dimensional polynomial coefficients in \eqref{fexpand}, and the construction of the corresponding polynomials appearing \eqref{expand} as solutions to finite-dimensional, linear ordinary differential equations.
Section \ref{Gdecay} contains explicit estimates that ultimately furnish ``eventual" exponential decay for the weak solutions in Sobolev and Gevrey spaces, see Propositions \ref{theo22} and \ref{theo23}. They are used crucially in section \ref{pfsec}, which is devoted to the proofs of our main results.

\section{Background and main results}\label{bkgmain}

The space $L^2(\Omega)^3$ of square (Lebesgue) integrable vector fields on $\Omega$ is a Hilbert space with the standard inner product $\inprod{\cdot,\cdot}$ and norm $|\cdot
|$ defined by 
$$
\inprod{u,v}=\int_\Omega \vecu(\vecx)\cdot \vecv(\vecx) d\vecx
\quad\text{and}\quad  |u|
        =\inprod{u,u}^{1/2}\quad\text{for } u=\vecu(\cdot),\ v=\vecv(\cdot).$$

We note that $|\cdot|$ is also used to  denote  the absolute value, modulus, and, more generally, the Euclidean norm in $\R^n$ and $\C^n$, for $n\in\N$. Nonetheless, its meaning will be made clear by the context.

Let $\mathcal{V}$ be the set of all $L$-periodic trigonometric polynomial vector fields which are divergence-free and  have zero average over $\Omega$.  
Define
$$H, \text{ resp. } V\ =\text{ closure of }\mathcal{V} \text{ in }
L^2(\Omega)^3, \text{ resp. } H^1(\Omega)^3.$$

We use the following embeddings and identification
$$V\subset H=H'\subset V',$$ 
where each space is dense in the next one, and the embeddings are compact.


Let $\mathcal{P}$ denote the orthogonal (Leray) projection in $L^2(\Omega)^3$ onto $H$. Explicitly,
	\begin{align*}
		\mathcal{P}\Big(\sum_{\veck\ne \mathbf 0} 
\widehat \vecu(\veck)e^{i\veck\cdot \vecx}\Big) = \sum_{\veck\ne \mathbf 0} \Big\{\widehat\vecu(\veck)-\Big(\widehat\vecu(\veck)\cdot \frac{\veck}{|\veck|}\Big)\frac{\veck}{|\veck|}\Big\}e^{i\veck\cdot \vecx} .
	\end{align*}

We define the Stokes operator $A:V\to V'$ by	
\beqs
\inprod{A\vecu,\vecv}_{V',V}=
\doubleinprod{\vecu,\vecv}
\eqdef \sum_{i=1}^3 \inprod{ \frac{\partial \vecu}{\partial x_i} , \frac{\partial \vecv}{\partial x_i} },\quad \text{for all } \vecu,\vecv\in V.
\eeqs

As an unbounded operator on $H$, the operator $A$ has the domain $\mD(A)=V\cap H^2(\Omega)^3$, and 
\beqs A\vecu = - \mathcal{P}\Delta \vecu=-\Delta \vecu\in H, \quad \textrm{for all}\quad \vecu\in\mD(A).
\eeqs
The last identity is due to the periodic boundary conditions.

It is known that the spectrum  of the Stokes operator $A$ is $\sigma(A)=\{\lambda_j:j\in \N\}$,
where $\lambda_j$ is strictly increasing in $j$, and is an  eigenvalue of $A$.
In fact, for each $j\in\N$, $\lambda_j=|\veck|^2$ for some $\veck\in\Z^3\setminus \{\mathbf 0\}$.  Note that since $\sigma(A)\subset \N$ and $\lambda_1=1$, the additive semigroup generated by $\sigma(A)$ is equal to $\N$.

\medskip 
For $\alpha,\sigma \in \R$ and  $\vecu=\sum_{\veck\ne \mathbf 0} 
\widehat \vecu(\veck)e^{i\veck\cdot \vecx}$, define
$$A^\alpha \vecu=\sum_{\veck\ne \mathbf 0} |\veck|^{2\alpha} \widehat \vecu(\veck)e^{i\veck\cdot 
\vecx},$$
$$A^\alpha e^{\sigma A^{1/2}} \vecu=\sum_{\veck\ne \mathbf 0} |\veck|^{2\alpha}e^{\sigma 
|\veck|} \widehat \vecu(\veck)e^{i\veck\cdot 
\vecx}.$$

We then define the  Gevrey spaces by
\beqs
G_{\alpha,\sigma}=\mD(A^\alpha e^{\sigma A^{1/2}} )\eqdef \{ \vecu\in H: |\vecu|_{\alpha,\sigma}\eqdef |A^\alpha 
e^{\sigma A^{1/2}}\vecu|<\infty\},
\eeqs
and the domain of the fractional operator $A^\al$ by 
\beqs 
\mD(A^\alpha)=G_{\alpha,0}=\{ \vecu\in H: |A^\alpha \vecu|=|\vecu|_{\alpha,0}<\infty\}.
\eeqs

Thanks to the zero-average condition \eqref{Zacond}, the norm $|A^{m/2}\vecu|$ is equivalent to $\|\vecu\|_{H^m(\Omega)^3}$ on the space $\mathcal D(A^{m/2})$ for $m=0,1,2,\ldots$

Note that $\mD(A^0)=H$,  $\mD(A^{1/2})=V$, and  $\|\vecu\|\eqdef |\nabla \vecu|$ is equal to $|A^{1/2}\vecu|$ for $\vecu\in V$.
Also, the spaces $G_{\alpha,\sigma}$ are decreasing in $\alpha$ and $\sigma$.

Denote for $\sigma\in\R$ the space 
\beqs
E^{\infty,\sigma}=\bigcap_{\alpha\ge 0} G_{\alpha,\sigma}=\bigcap_{m\in\N } G_{m,\sigma}.
\eeqs

We will say that an asymptotic expansion \eqref{gexpand} holds in  $E^{\infty,\sigma}$ if it holds in $G_{\alpha,\sigma}$ for all $\alpha\ge 0$.

Let us also denote by $\mathcal{P}^{\alpha,\sigma}$ the space of $G_{\alpha,\sigma}$-valued polynomials in case $\alpha\in \R$, and 
the space of $E^{\infty,\sigma}$-valued polynomials in case $\alpha=\infty$.

\medskip

We  define the bilinear mapping $B:V\times V\to V'$, which is associated with
the nonlinear term in the NSE,  by
\beqs
\inprod{B(\vecu,\vecv),\vecw}_{V',V}=b(\vecu,\vecv,\vecw)\eqdef \int_\Omega ((\vecu\cdot \nabla) \vecv)\cdot \vecw\, d\vecx, \quad \textrm{for all}\quad \vecu,\vecv,\vecw\in V.
\eeqs 
In particular,
\beqs
B(\vecu,\vecv)=\mathcal{P}((\vecu\cdot \nabla) \vecv), \quad \textrm{for all}\quad \vecu,\vecv\in\mD(A).
\eeqs
More precisely, for $\vecu=\sum_{\veck\ne 0} 
\widehat \vecu(\veck)e^{i\veck\cdot \vecx}$ and $\vecv=\sum_{\veck\ne 0} 
\widehat \vecv(\veck)e^{i\veck\cdot \vecx}$,
\beqs
B(\vecu,\vecv)
= \sum_{\veck\ne \mathbf 0}  \Big\{\widehat\vecb(\veck)-\Big(\widehat\vecb(\veck)\cdot \frac{\veck}{|\veck|}\Big)\frac{\veck}{|\veck|} \Big\}e^{i\veck\cdot 
\vecx},
\quad\text{where }\widehat\vecb(\veck)=\sum_{\vecm+\vecl=\veck} 
i  (\widehat\vecu(\vecm)\cdot \vecl)\widehat\vecv(\vecl).
\eeqs

It is clear that
\beq\label{BVV}
B(\mathcal V,\mathcal V)\subset \mathcal V.
\eeq

By applying the Leray projection $\mathcal{P}$ to  \eqref{nse} and \eqref{ini}, 
we rewrite the initial value problem for NSE in the functional form as
\beq\label{fctnse}
\frac{du(t)}{dt} + Au(t) +B(u(t),u(t))=\mathcal P f(t) \quad \text{ in } V' \text{ on } (0,\infty),
\eeq
with the initial data 
\beq\label{uzero} 
u(0)=u^0\in H.
\eeq
(See e.g. \cite{LadyFlowbook69,CFbook,TemamAMSbook} for more details.)

Because of the projection $\mathcal P$ on the right-hand side of  \eqref{fctnse}, it is convenient to assume, without loss of generality that the force belongs to $H$. Then, we have
$$\mathcal P f(t)=f(t)\text{ in  \eqref{fctnse}}.$$

In the case $\mathbf f(\vecx,t)$ is a potential force, then, by the Helmholtz-Leray decomposition, 
\beq \label{potential}
\mathcal P f(t)\equiv 0 \text{ in the functional equation \eqref{fctnse}}.
\eeq

In dealing with weak solutions of \eqref{fctnse}, we follow the presentation in \cite{FMRTbook} and use the results there.

\begin{definition}\label{lhdef}
Let $f\in L^2_{\rm loc}([0,\infty),H)$.
A \emph{Leray-Hopf weak solution} $u(t)$ of \eqref{fctnse} is a mapping from $[0,\infty)$ to $H$ such that 
\beq\label{lh:wksol}
u\in C([0,\infty),H_{\rm w})\cap L^2_{\rm loc}([0,\infty),V),\quad u'\in L^{4/3}_{\rm loc}([0,\infty),V'),
\eeq
and satisfies 
\beq\label{varform}
\ddt \inprod{u(t),v}+\doubleinprod{u(t),v}+b(u(t),u(t),v)=\inprod{f(t),v}
\eeq
in the distribution sense in $(0,\infty)$, for all $v\in V$, and the energy inequality
\beq\label{Lenergy}
\frac12|u(t)|^2+\int_{t_0}^t \|u(\tau)\|^2d\tau\le \frac12|u(t_0)|^2+\int_{t_0}^t \langle f(\tau),u(\tau)\rangle d\tau
\eeq
holds for $t_0=0$ and almost all $t_0\in(0,\infty)$, and all $t\ge t_0$.  

We will say that a Leray-Hopf weak solution $u(t)$ is \emph{regular} if $u\in C([0,\infty),V)$.
\end{definition}

Above, $H_{\rm w}$ is the topological vector space $H$ with the weak topology.

This definition of the Leray-Hopf weak solutions with the choice of the energy inequality \eqref{Lenergy} is, in fact, equivalent to the weak solutions used in \cite[Chapter II, section 7]{FMRTbook}, see e.g. Remark 1(e) of  \cite{FRT2010} for the explanations.

\begin{assumption}
It is assumed throughout the paper that the function $f(t)$ belongs to $L^\infty_{\rm loc}([0,\infty),H)$.
\end{assumption}

This assumption serves to guarantee the existence of the Leray-Hopf weak solutions for any $u^0\in H$, see e.g. \cite{FMRTbook}. We note that, later in the paper,  we will further impose that the force, $f(t)$, decays in time, for instance, in Sobolev norms.

Regarding the problem of finding asymptotic expansions for solutions of the NSE, it is natural, at this stage,  to study the class of functions $f(t)$ which have similar asymptotic behavior as $u(t)$ in \eqref{expand}.
We specify the condition on $f(t)$ more precisely in the next theorem, which is our first main result.

\begin{theorem}[Asymptotic expansion] \label{mainthm}
Assume that there exist a number $\sigma_0\geq0$ and polynomials $f_n\in\mathcal{P}^{\infty,\sigma_0}$, for all $n\ge 1$, such that $f(t)$ has the asymptotic expansion
\beq\label{forcexpand}
f(t)\sim \sum_{n=1}^\infty f_n(t)e^{-nt}\quad \text{in }E^{\infty,\sigma_0} .
\eeq

Let $u(t)$ be a Leray-Hopf weak solution of \eqref{fctnse} and \eqref{uzero}.
Then there exist polynomials $q_n\in\mathcal{P}^{\infty,\sigma_0}$, for all $n\ge 1$, such that $u(t)$ has the asymptotic expansion
 \beq\label{uexpand}
u(t)\sim  \sum_{n=1}^\infty  q_n(t) e^{-nt}\quad \text{in }E^{\infty,\sigma_0} .
 \eeq
 
Moreover, the mappings
\beq\label{uF}
u_n(t)\eqdef q_n(t) e^{-nt} \quad\text{and}\quad F_n(t)\eqdef f_n(t)e^{-nt},
\eeq
satisfy the following ordinary differential equations in the space $E^{\infty,\sigma_0}$
\beq\label{unODE}
\ddt u_n(t) + Au_n(t)  +\sum_{\stackrel{k,m\ge 1}{k+m=n}}B(u_k(t),u_m(t))= F_n(t),\quad t\in\R,
\eeq
for all $n\ge 1$.
\end{theorem}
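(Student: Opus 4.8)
The plan is to follow the scheme of \cite{FS87,HM1}, building the polynomials $q_n$ recursively while propagating a quantitative remainder bound. Since \eqref{uexpand} only concerns $t\to\infty$, the first move is to pass to the regime in which $u$ is smooth: by Propositions~\ref{theo22} and~\ref{theo23} there is $T_0>0$ with $u(t)\in E^{\infty,\sigma_0}$ and $u$ solving \eqref{fctnse} classically for $t\ge T_0$, and $|u(t)|_{\alpha,\sigma_0}=\mathcal O(e^{-\mu_0 t})$ for every $\alpha\ge0$ and some $\mu_0>0$. Feeding this and the hypothesis \eqref{forcexpand} (whose decay is $\mathcal O(e^{-(1-\delta)t})$ for every $\delta>0$) into the Duhamel formula for \eqref{fctnse}, and using $|e^{-tA}h|_{\alpha,\sigma_0}\le e^{-t}|h|_{\alpha,\sigma_0}$, a short preliminary bootstrap then upgrades the decay to $|u(t)|_{\alpha,\sigma_0}=\mathcal O(e^{-\theta t})$ for a fixed $\theta\in(\tfrac12,1)$. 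I would also record the elementary lemma that, for any $p\in\mathcal P^{\infty,\sigma_0}$ and $N\in\N$, the linear ODE $q'+(A-NI)q=p$ admits a solution $q\in\mathcal P^{\infty,\sigma_0}$, unique up to an additive constant in $\ker(A-NI)$; this is seen by solving on each eigenspace of the Stokes operator --- inverting $A-NI$ through a finite-degree recursion when the eigenvalue is $\ne N$, integrating when it equals $N$ --- and observing that $\ker(A-NI)$ consists of trigonometric polynomials, hence lies in $E^{\infty,\sigma_0}$.

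With these in hand, I would prove by induction on $N\ge0$ the statement $(S_N)$: there exist $q_1,\dots,q_N\in\mathcal P^{\infty,\sigma_0}$ such that the $u_n:=q_ne^{-nt}$ satisfy \eqref{unODE} for $1\le n\le N$, and $w_N:=u-\sum_{n=1}^N u_n$ satisfies $|w_N(t)|_{\alpha,\sigma_0}=\mathcal O(e^{-(N+\varepsilon_N)t})$ for all $\alpha\ge0$ and some $\varepsilon_N\in(0,1]$. The base case $N=0$ is the preliminary bootstrap. For the step $(S_{N-1})\Rightarrow(S_N)$, I would insert $u=\sum_{n=1}^{N-1}u_n+w_{N-1}$ into \eqref{fctnse} and use \eqref{unODE} for $n<N$; bilinearity of $B$ gives
\[
w_{N-1}'+Aw_{N-1}=\Big(f-\sum_{n=1}^{N-1}F_n\Big)-\!\!\sum_{\substack{1\le k,m\le N-1\\ k+m\ge N}}\!\!B(u_k,u_m)-B(U_{N-1},w_{N-1})-B(w_{N-1},U_{N-1})-B(w_{N-1},w_{N-1}),
\]
where $U_{N-1}:=\sum_{n=1}^{N-1}u_n=\mathcal O(e^{-t})$. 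The terms of exact order $e^{-Nt}$ on the right are $F_N=f_Ne^{-Nt}$, extracted from the first group via \eqref{forcexpand}, and $\big(\sum_{k+m=N}B(q_k,q_m)\big)e^{-Nt}$, extracted from the $k+m=N$ part of the second; by the Gevrey bilinear estimate of \cite{HM1}, the inductive decay of $w_{N-1}$, the remainder in \eqref{forcexpand}, and the inequality $2(N-1+\varepsilon_{N-1})>N$ (which at $N=1$ is exactly why $\theta>\tfrac12$ is needed), all remaining contributions are $\mathcal O(e^{-(N+c)t})$ for some $c\in(0,1]$. Hence
\[
w_{N-1}'+Aw_{N-1}=p_N(t)\,e^{-Nt}+r_N(t),\qquad p_N:=f_N-\!\!\sum_{\substack{k+m=N\\k,m\ge1}}\!\!B(q_k,q_m)\in\mathcal P^{\infty,\sigma_0},\quad r_N=\mathcal O(e^{-(N+c)t}).
\]
Then I would take $q_N^{(0)}\in\mathcal P^{\infty,\sigma_0}$ solving $q'+(A-NI)q=p_N$ (so that $u_N^{(0)}:=q_N^{(0)}e^{-Nt}$ verifies \eqref{unODE} with $n=N$), set $w_N^{(0)}:=w_{N-1}-u_N^{(0)}$, which solves $w'+Aw=r_N$ and is a priori $\mathcal O(e^{-(N-1+\varepsilon_{N-1})t})$, and analyze this ODE mode by mode on the eigenspaces of $A$: on eigenvalues $<N$ the a priori bound forces the slowly decaying modes to vanish (integer eigenvalues are $\le N-1<N-1+\varepsilon_{N-1}$), on eigenvalues $\ge N+1$ the spectral gap yields $\mathcal O(e^{-(N+c)t})$, and on $\ker(A-NI)$ one obtains $P_Nw_N^{(0)}(t)=\eta_N e^{-Nt}+\mathcal O(e^{-(N+c)t})$ for some $\eta_N\in\ker(A-NI)$. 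Thus $w_N^{(0)}=\eta_N e^{-Nt}+\mathcal O(e^{-(N+c)t})$, and setting $q_N:=q_N^{(0)}+\eta_N$ (still in $\mathcal P^{\infty,\sigma_0}$ and still solving the same linear ODE, since $\eta_N$ is constant and annihilated by $A-NI$) and $w_N:=w_{N-1}-q_Ne^{-Nt}$ proves $(S_N)$ with $\varepsilon_N:=c$, leaving $q_1,\dots,q_{N-1}$ untouched. Ranging over $N$ then gives \eqref{uexpand} in $E^{\infty,\sigma_0}$ in the sense of Definition~\ref{expanddef}, and \eqref{unODE} holds for all $n$ by construction.

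The hard part will be the bookkeeping inside the inductive step: first, checking that every nonlinear error in the equation for $w_{N-1}$ decays strictly faster than $e^{-Nt}$ --- the tightest case being $B(w_{N-1},w_{N-1})$ at $N=1$, which is precisely what dictates pushing the preliminary decay past $e^{-t/2}$ --- and second, extracting from $w'+Aw=r_N$ the fact that $w_N^{(0)}$ has no mode decaying more slowly than $e^{-Nt}$ (these were already eliminated at the previous stages, where the already-constructed $q_n$ and the integrality of $\sigma(A)$ come in) and that its $e^{-Nt}$ coefficient is a genuine constant rather than a polynomial in $t$ (forced by $r_N$ decaying strictly faster than $e^{-Nt}$). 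I would emphasize that, because the $f_n$ are themselves polynomials in $t$, this route avoids the hypotheses on all time-derivatives of $f$ that a direct imitation of \cite{FS87} would require; and that running the whole argument simultaneously in every $G_{\alpha,\sigma_0}$ demands only that the bilinear estimate and the bound $|e^{-tA}h|_{\alpha,\sigma_0}\le e^{-t}|h|_{\alpha,\sigma_0}$ hold there.
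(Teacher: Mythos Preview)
Your proposal is correct and follows essentially the same approach as the paper: induction on $N$, eigenspace-by-eigenspace construction of $q_N$ via the linear ODE $q'+(A-NI)q=p_N$, and quantitative remainder control using the bilinear Gevrey estimate. The only differences are organizational---you first produce a particular polynomial solution $q_N^{(0)}$ and then correct by a constant $\eta_N\in\ker(A-NI)$, whereas the paper packages both steps into its Lemma~\ref{polylem} applied on each $R_kH$ and then sums in $k$---and your extra Duhamel bootstrap to push the preliminary decay past $e^{-t/2}$ is redundant, since Proposition~\ref{theo23} already gives $|u(t)|_{\alpha,\sigma_0}=\mathcal O(e^{-(1-\delta)t})$ for every $\delta\in(0,1)$ (also, what you write as $P_N$ for the projection onto $\ker(A-NI)$ is the paper's $R_N$, not its $P_N=R_1+\cdots+R_N$).
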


Regarding equation \eqref{unODE}, when $n=1$, the sum on its left-hand side is empty, hence the equation reads as
\beq\label{u1ODE}
\ddt u_1(t) + Au_1(t) = F_1(t).
\eeq

\begin{remark}\label{betterem}
Observe that since the expansion \eqref{forcexpand} is an infinite sum, it immediately implies the following remainder estimate:
\begin{align*}
\Big|f(t)-\sum_{n=1}^N f_n(t)e^{-nt}\Big|_{\alpha,\sigma_0}
&\le \Big|f_{N+1}(t)e^{-(N+1)t}\Big|_{\alpha,\sigma_0}+\Big|f(t)-\sum_{n=1}^{N+1} f_n(t)e^{-nt}\Big|_{\alpha,\sigma_0}\\
&=\mathcal O(e^{-(N+\varep)t})+\mathcal O(e^{-(N+1+\delta_{N+1},\alpha)t}),
\end{align*}
which holds for all $N\geq1$, $\alpha\ge 0$, $\varep\in(0,1)$ and some $\delta_{N+1,\alpha}\in(0,1)$. Therefore, we have for all $N\geq1$, $\alpha\ge0$  that
\beq\label{fep}
\Big|f(t)-\sum_{n=1}^N f_n(t)e^{-nt}\Big|_{\alpha,\sigma_0}
=\mathcal O(e^{-(N+\varep)t}) \quad\text{as }t\to\infty,\quad \forall\varep\in(0,1).
\eeq
Similarly, the expansion \eqref{uexpand} implies for any $N\ge 1$ and $\alpha\ge0$ that 
 \beq\label{uqa}
\Big |u(t)-\sum_{n=1}^N q_n(t) e^{-nt}\Big |_{\alpha,\sigma_0} =\mathcal O(e^{-(N+\varep)t} )\quad\text{as } t\to\infty,\quad \forall\varep\in(0,1).
 \eeq
(In fact, the same argument applies to the general expansion \eqref{gexpand} so that $\varep_N$ in \eqref{grem} can be taken arbitrarily in $(0,1)$.)
\end{remark}

\begin{remark}\label{mainrmk} The following additional remarks on Theorem \ref{mainthm} are in order.
\begin{enumerate}[label={\rm (\alph*)}]
 \item We do not require the time derivative $d^m f/dt^m$, \emph{for all} $m\in\N$, to have any kind of expansion. 
 Indeed, this rather stringent requirement would have been imposed if one adapts Foias-Saut's original proof. 
 Our relaxation of this condition is owed to the higher regularity of the solutions for large time, in the particular case of periodic domains, either in Sobolev or Gevrey spaces.
In the case of Sobolev spaces ($\sigma_0=0$), it requires a  bootstrapping scheme to gradually increase the regularity to any needed level.
In the case of the Gevrey spaces ($\sigma_0>0$), the effect is immediate, hence the proof is shorter.
(For the related Gevrey norm techniques, see \cite{FT-Gevrey, HM1, OliverTiti2000} and references therein.)

 \item The equations \eqref{unODE} determine the polynomials $q_n(t)$'s and indicate the interactions on all scales between the  body force and the nonlinear terms in NSE. 
 Even though the solution $u(t)$ decays to zero, such interactions are complicated.

 \item Condition \eqref{forcexpand} is easily satisfied for any finite sum
\beqs
f(t)=\sum_{n=1}^N f_n(t)e^{-nt},
\eeqs
for some fixed $N\ge 1$, and polynomials $f_n(t)$'s belonging to $\mathcal V$. Even in this case, the result in Theorem \ref{mainthm} is new and the expansion \eqref{uexpand} for $u(t)$ can still be an infinite sum.
 
 \item If the expansion \eqref{forcexpand} for $f(t)$ holds in $G_{0,\sigma_1}$ for some $\sigma_1>0$, then it holds in  $E^{\infty,\sigma_0}$ for any $\sigma_0\in (0,\sigma_1)$, and hence, by Theorem \ref{mainthm},  the solution $u(t)$ admits the expansion \eqref{uexpand} in  $E^{\infty,\sigma_0}$ for any $\sigma_0\in (0,\sigma_1)$.

\item The equations \eqref{unODE} in fact are  {\it linear} systems of ordinary differential equations (ODEs) in infinite-dimensional spaces. They form an integrable system in the sense that it can be recursively solved by the variation of constants formula. Moreover, each solution $u_n(t)$ of the form \eqref{uF} is uniquely determined provided that $R_n u_n(0)=\xi_n\in R_nH$ is given. 
\end{enumerate}
\end{remark}

In the following simple scenario, the ODEs  \eqref{unODE} are finite-dimensional systems, which make them more accessible for deeper study, and, in particular, for numerical computations.

\begin{corollary}\label{Vcor}
If all $f_n(t)$'s in Theorem \ref{theo22} are $\mathcal V$-valued polynomials, 
then so are the polynomials $q_n(t)$'s in the expansion \eqref{uexpand}, and  consequently, the equations \eqref{unODE} are systems of linear ODEs in finite-dimensional spaces.
\end{corollary}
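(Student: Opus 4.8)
The plan is to proceed by strong induction on $n\ge 1$, simultaneously establishing that each $q_n(t)$ is a $\mathcal V$-valued polynomial and that the system \eqref{unODE} defining $u_n=q_ne^{-nt}$ closes on a finite-dimensional space. The key structural facts to invoke are: first, the orthogonal eigenspace decomposition of $A$, with each eigenspace $R_jH$ (the projection onto $\ker(A-\lambda_jI)$) consisting of trigonometric polynomial vector fields, so that $\mathcal V=\bigoplus_{j} R_jH$ in the algebraic sense; second, the fact, already recorded in \eqref{BVV}, that $B(\mathcal V,\mathcal V)\subset\mathcal V$, and more quantitatively that $B$ maps a pair of trigonometric polynomials of bounded frequency to a trigonometric polynomial of bounded frequency (the product of frequencies adds); and third, that by hypothesis each $F_n(t)=f_n(t)e^{-nt}$ with $f_n$ a $\mathcal V$-valued polynomial, so $F_n$ takes values in a fixed finite-dimensional subspace of $\mathcal V$ for all $n$.

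First I would record the base case $n=1$: equation \eqref{u1ODE} reads $\ddt u_1+Au_1=F_1$ with $F_1(t)=f_1(t)e^{-t}$, $f_1$ a $\mathcal V$-valued polynomial. Since $\mathcal V$ decomposes into $A$-invariant finite-dimensional eigenspaces and $f_1$ has values in finitely many of them, the ODE decouples into finitely many scalar (vector-valued, finite-dimensional) linear ODEs with forcing of the form (polynomial)$\times e^{-t}$; by the variation of constants formula, as noted in Remark \ref{mainrmk}(e), the solution $u_1(t)$ that matches the prescribed datum $R_1u_1(0)=\xi_1$ is again of the form (polynomial)$\times e^{-t}$ with the polynomial $\mathcal V$-valued — indeed valued in the same finite-dimensional subspace of $\mathcal V$ spanned by the frequencies appearing in $f_1$ together with $R_1H$. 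Hence $q_1$ is a $\mathcal V$-valued polynomial.

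For the inductive step, assume $q_1,\dots,q_{n-1}$ are $\mathcal V$-valued polynomials, each valued in some fixed finite-dimensional subspace $W_k\subset\mathcal V$. Then in \eqref{unODE} the term $\sum_{k+m=n}B(u_k,u_m)$ is, by \eqref{BVV} and bilinearity, of the form (polynomial)$\times e^{-nt}$ with the polynomial taking values in the finite-dimensional space $\mathrm{span}\,B(W_k,W_m)\subset\mathcal V$; adding $F_n(t)$, which is (polynomial)$\times e^{-nt}$ valued in a finite-dimensional subspace of $\mathcal V$, the whole right-hand-minus-nonlinear forcing term has this same structure. So \eqref{unODE} becomes $\ddt u_n+Au_n=(\text{polynomial valued in a fixed finite-dim.\ subspace }Z_n\subset\mathcal V)\times e^{-nt}$. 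Enlarging $Z_n$ to be $A$-invariant (a finite union of eigenspaces still lies in $\mathcal V$ and is still finite-dimensional) and adjoining $R_nH$, the equation is a linear constant-coefficient ODE on a finite-dimensional subspace of $\mathcal V$ with forcing (polynomial)$\times e^{-nt}$; variation of constants, together with the uniqueness condition $R_nu_n(0)=\xi_n$ from Theorem \ref{mainthm} and Remark \ref{mainrmk}(e), yields $u_n(t)=q_n(t)e^{-nt}$ with $q_n$ a $\mathcal V$-valued polynomial. The claim that \eqref{unODE} is then a system of linear ODEs in finite-dimensional spaces follows because, for each fixed $n$, all of $u_1,\dots,u_n$ and $F_n$ live in the finite-dimensional $A$-invariant space $W_1+\dots+W_n+Z_n+R_nH$.

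I do not expect a serious obstacle here, since the heavy analytic lifting — existence of the expansions \eqref{forcexpand}, \eqref{uexpand} and the validity of \eqref{unODE} — is already furnished by Theorem \ref{mainthm}; the corollary is purely an algebraic tracking argument. The one point requiring a little care is the bookkeeping on frequencies: one must check that the support (in Fourier modes) of $B(W_k,W_m)$ is genuinely finite, which is immediate from the convolution formula for $\widehat{\vecb}(\veck)$ since only finitely many $\vecm,\vecl$ with $\vecm+\vecl=\veck$ contribute when $\widehat\vecu,\widehat\vecv$ are finitely supported, and that the resulting polynomial degree stays finite, which is clear since the degree of $q_n$ is controlled by the degrees of $q_k,q_m$ ($k+m=n$) and of $f_n$ together with the Jordan structure of $A-nI$ on the relevant finite-dimensional space. (One should also note the harmless typo that ``Theorem \ref{theo22}'' in the statement should read Theorem \ref{mainthm}.)
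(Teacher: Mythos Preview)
Your proposal is correct and follows essentially the same inductive argument as the paper: use $B(\mathcal V,\mathcal V)\subset\mathcal V$ (relation \eqref{BVV}) to see that the forcing term $f_{N+1}-\sum_{m+j=N+1}B(q_m,q_j)$ in each step is a $\mathcal V$-valued polynomial, hence so is $q_{N+1}$. The only cosmetic difference is that the paper appeals directly to the explicit eigenspace components $q_{N+1,k}$ constructed in the proof of Theorem \ref{mainthm} (observing that all but finitely many vanish), whereas you treat Theorem \ref{mainthm} as a black box and invoke the uniqueness statement in Remark \ref{mainrmk}(e) to identify your finite-dimensional ODE solution with the $q_n$ produced there.
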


Next is the paper's second main result which deals with the case when $f(t)$ does not possess an asymptotic expansion \eqref{forcexpand}, but rather a \emph{finite asymptotic approximation}. Then it is proved that  the weak solution also admits a finite asymptotic approximation of the same type. 

\begin{theorem}[Finite asymptotic approximation]\label{finitetheo}
Suppose there exist an integer $N_*\ge1$, real numbers $\sigma_0\ge 0$, $\mu_*\ge \alpha_*\ge N_*/2$,  and, for any $1\le n\le N_*$,  numbers $\delta_n\in(0,1)$ and polynomials $f_n\in\mathcal{P}^{\mu_n,\sigma_0}$,  such that  
\beq \label{ffinite}
\Big|f(t)-\sum_{n=1}^N f_n(t)e^{-nt}\Big|_{\alpha_N,\sigma_0}=\mathcal O(e^{-(N+\delta_N)t}) \quad\text{as }t\to\infty,
\eeq
for $1\le N\le N_*$, where
\beqs
\mu_n=\mu_*-(n-1)/2,\quad \alpha_n=\alpha_*-(n-1)/2.
\eeqs

Let $u(t)$ be a Leray-Hopf weak solution of \eqref{fctnse} and \eqref{uzero}.

\begin{enumerate}[label={\rm{(\roman*)} }]
 \item  Then there exist polynomials $q_n\in\mathcal{P}^{\mu_n+1,\sigma_0}$, for  $1\le n\le N_*$, 
such that one has  for $1\le N\le N_*$ that
 \beq \label{ufinite}
\Big |u(t)-\sum_{n=1}^N q_n(t)e^{-nt}\Big|_{\alpha_N,\sigma_0}=\mathcal O(e^{-(N+\varep)t})\quad\text{as }t\to\infty,\quad\forall\varep\in(0,\delta_N^*),
 \eeq
 where
 $ \delta_N^*=\min\{\delta_1,\delta_2,\ldots,\delta_N\}$.
 
Moreover, the ODEs  \eqref{unODE} hold in the corresponding space $G_{\mu_n,\sigma_0}$ for  $1\le n\le N_*$.

\item In particular, if all $f_n(t)$'s belong to $\mathcal V$, resp., $E^{\infty,\sigma_0}$, then so do all $q_n(t)$'s, and the ODEs \eqref{unODE} hold in $\mathcal V$, resp., $E^{\infty,\sigma_0}$.
\end{enumerate}
\end{theorem}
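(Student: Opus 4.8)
Looking at Theorem \ref{finitetheo}, the proof naturally splits into establishing the finite approximation (part (i)) and then tracking the extra structure on the coefficients (part (ii)). Let me sketch a plan.

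The strategy is to run the same finite induction as in the proof of Theorem~\ref{mainthm}, but only $N_*$ times and with careful bookkeeping of the regularity indices $\mu_n$ and $\alpha_n$. Thanks to \eqref{ffinite} (at level $N=1$) the force decays exponentially, so Propositions~\ref{theo22} and \ref{theo23} apply and allow me to reduce to the case of a regular solution which, for every $\varep\in(0,1)$, satisfies $|u(t)|_{\mu_*,\sigma_0}=\mathcal O(e^{-(1-\varep)t})$ as $t\to\infty$; when $\sigma_0=0$ this is precisely where the bootstrapping in Part~II of the proof of Proposition~\ref{theo23} is needed, in order to reach the top regularity level $\mu_*$ that the induction will consume.

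I would then prove, by induction on $N$ from $1$ to $N_*$, the statement: there exist polynomials $q_n\in\mathcal P^{\mu_n+1,\sigma_0}$ for $1\le n\le N$, satisfying \eqref{unODE} for those $n$, such that $v_N(t):=u(t)-\sum_{n=1}^N q_n(t)e^{-nt}$ obeys $|v_N(t)|_{\alpha_N,\sigma_0}=\mathcal O(e^{-(N+\varep)t})$ for every $\varep\in(0,\delta_N^*)$. For the inductive step, set $w_{N-1}=\sum_{n=1}^{N-1}q_n(t)e^{-nt}$ (so $w_0=0$) and subtract the equations \eqref{unODE} for $n<N$ from \eqref{fctnse}; one finds that $v_{N-1}$ solves
\beqs
\ddt v_{N-1}+Av_{N-1}+B(v_{N-1},v_{N-1})+B(w_{N-1},v_{N-1})+B(v_{N-1},w_{N-1})=h_N(t),
\eeqs
where $h_N(t)=\big(f(t)-\sum_{n=1}^{N-1}f_n(t)e^{-nt}\big)-\sum_{k,m\ge1,\ k+m\ge N,\ k,m\le N-1}B(q_k(t),q_m(t))e^{-(k+m)t}$. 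Using \eqref{ffinite} at level $N$, the inductive decay of $v_{N-1}$ — which exceeds the rate $N-1$ by the positive margin $\delta_{N-1}^*$, so that $B(w_{N-1},v_{N-1})$ and $B(v_{N-1},w_{N-1})$ are of order $e^{-(N+\delta_N^*-2\varep)t}$ in $|\cdot|_{\alpha_N,\sigma_0}$ and $B(v_{N-1},v_{N-1})$ is of still higher order — and the calibrated bilinear bound $B(G_{\alpha,\sigma_0}\times G_{\beta,\sigma_0})\subset G_{\min\{\alpha,\beta\}-1/2,\sigma_0}$, one rewrites this as $\ddt v_{N-1}+Av_{N-1}=P_N(t)e^{-Nt}+r_N(t)$ with $P_N:=f_N-\sum_{k+m=N}B(q_k,q_m)$ and $|r_N(t)|_{\alpha_N,\sigma_0}=\mathcal O(e^{-(N+\delta_N^*-2\varep)t})$; the inequalities $\mu_k+1-\tfrac12\ge\mu_N$ and $\alpha_k+1-\tfrac12\ge\alpha_N$ for $k+m=N$ — which are exactly where the choices $\mu_n=\mu_*-(n-1)/2$, $\alpha_n=\alpha_*-(n-1)/2$ are used — give $P_N\in\mathcal P^{\mu_N,\sigma_0}$.

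Projecting this last equation onto each eigenspace of $A$ and using the integrating factor $e^{\lambda_j t}$ — integrating forward from a fixed time on the resonant eigenspace $\lambda_j=N$ (present iff $N\in\sigma(A)$) and on the eigenspaces with $\lambda_j>N$, and backward from $+\infty$ on those with $\lambda_j<N$ (legitimate there since $\lambda_j<N-1+\delta_{N-1}^*$) — one reads off a polynomial $q_N$, whose component on $\ker(A-N)$ has degree one larger than that of the corresponding component of $P_N$, such that $v_N=v_{N-1}-q_N e^{-Nt}$ satisfies $|v_N(t)|_{\alpha_N,\sigma_0}=\mathcal O(e^{-(N+\varep)t})$ for $\varep<\delta_N^*$. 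That $q_N\in\mathcal P^{\mu_N+1,\sigma_0}$ follows because, away from $\ker(A-N)$, the coefficients of $q_N$ are obtained by finitely many applications of $(A-N)^{-1}$, which maps $G_{\mu_N,\sigma_0}$ boundedly into $G_{\mu_N+1,\sigma_0}$ (the finitely many eigenvalues near $N$ only affect the constant), while the finite-dimensional resonant block costs no regularity; and \eqref{unODE} for $n=N$ holds by the construction of $q_N$. Taking $N=N_*$ gives part~(i).

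For part~(ii): if every $f_n$ lies in $\mathcal V$, then by \eqref{BVV} each $P_N$ is a $\mathcal V$-valued polynomial, and since $A$ is diagonal in the Fourier basis the recursion producing $q_N$ involves only the finitely many Fourier modes already present, so each $q_N$ is $\mathcal V$-valued and \eqref{unODE} is a finite-dimensional linear ODE; if instead every $f_n\in E^{\infty,\sigma_0}$, the same construction carried out at each level $\alpha\ge0$ produces the same $q_N$, hence $q_N\in\mathcal P^{\infty,\sigma_0}$. I expect the main obstacle to be keeping the extracted $q_N$ exactly in $\mathcal P^{\mu_N+1,\sigma_0}$ and the remainder exactly in $G_{\alpha_N,\sigma_0}$, which pins down the calibration of $\mu_n,\alpha_n$ against the half-derivative loss of $B$ and the one-derivative gain of $A^{-1}$, and — when $\sigma_0=0$ — the preliminary bootstrapping that even places the weak solution in these high-regularity spaces; a secondary subtlety is that $B(w_{N-1},v_{N-1})$ is only marginally of higher order, which is why the induction hypothesis must record a decay rate strictly larger than the integer $N-1$.
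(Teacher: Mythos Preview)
Your proposal is correct and follows essentially the same route as the paper: reduce to a regular solution via Proposition~\ref{theo23}, then run a finite induction projecting onto the eigenspaces of $A$ and invoking Lemma~\ref{polylem} to extract each $q_N$, with the half-derivative loss of $B$ dictating the step-down $\alpha_n=\alpha_*-(n-1)/2$, $\mu_n=\mu_*-(n-1)/2$ and the gain of one derivative from $(A-N)^{-1}$ placing $q_N$ in $\mathcal P^{\mu_N+1,\sigma_0}$.

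One small inaccuracy: the preliminary claim $|u(t)|_{\mu_*,\sigma_0}=\mathcal O(e^{-(1-\varep)t})$ is not justified by the hypotheses, since \eqref{ffinite} at level $N=1$ only controls $|f(t)|_{\alpha_*,\sigma_0}$, and Proposition~\ref{theo23} therefore yields decay of $u$ only in $G_{\alpha_*+1/2,\sigma_0}$, not in $G_{\mu_*,\sigma_0}$ (recall $\mu_*\ge\alpha_*$ may be much larger). Fortunately your inductive step never uses this: once you write $u=w_{N-1}+v_{N-1}$ and expand $B(u,u)$ accordingly, the bilinear remainder terms are controlled in $|\cdot|_{\alpha_N,\sigma_0}$ using only $|v_{N-1}|_{\alpha_{N-1},\sigma_0}$ (inductive hypothesis) and the polynomial decay of $w_{N-1}$; and for the base case you need only $|B(u,u)|_{\alpha_*,\sigma_0}$, hence $|u|_{\alpha_*+1/2,\sigma_0}$, which is exactly what Proposition~\ref{theo23} delivers. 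So replace $\mu_*$ by $\alpha_*+1/2$ in that sentence and the sketch goes through.
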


Regarding the finite approximation \eqref{ffinite}, in addition to its sum having only finitely many terms ($N\le N_*$), the force $f(t)$ now has much less regularity compared to the expansion \eqref{forcexpand}. This, therefore, determines the regularity of the solution $u(t)$ and its asymptotic approximation \eqref{ufinite}. Such dependence was worked out in detail in the above theorem.

It is worth noticing that Theorem \ref{finitetheo} is stronger than Theorem \ref{mainthm}. Nevertheless, the proof of Theorem \ref{finitetheo}, in  the current presentation,  is adapted from that of Theorem \ref{mainthm}.

\section{Exponential decay in Gevrey and Sobolev spaces}\label{Gdecay}

This section prepares for  the proofs in section \ref{pfsec}. Particularly, we will derive the exponential decay for weak solutions in both Gevrey  and Sobolev spaces.

First, we have a few basic inequalities concerning the Sobolev and Gevrey norms.
It is elementary to see that 
\beqs 
\max_{x\ge 0} (x^{2\alpha}e^{-\sigma x})=\Big(\frac{2\alpha}{e\sigma }\Big)^{2\alpha} \text{ for any } \sigma,\alpha>0.
\eeqs 

Applying this inequality, one easily obtains
\beq\label{als}
|A^\alpha u|=|(A^\alpha e^{-\sigma A^{1/2}})e^{\sigma A^{1/2}}u| \le \Big(\frac{2\alpha}{e\sigma}\Big)^{2\alpha} |e^{\sigma A^{1/2}}u|\quad \forall \alpha,\sigma>0.
\eeq

We recall a key estimate for the Gevrey norm of the bilinear form (cf. Lemma 2.1 in \cite{HM1}), which is based mainly on the work of Foias and Temam \cite{FT-Gevrey}.  

\begin{lemma}
\label{nonLem}
Let $\sigma\ge 0$ and $\alpha\ge 1/2$.  There exists an absolute constant $K>1$, independent of $\alpha,\sigma$, such that
\beq\label{AalphaB} 
|B(u,v)|_{\alpha ,\sigma }\le K^\alpha  |u|_{\alpha +1/2,\sigma } \, |v|_{\alpha +1/2,\sigma}\quad \forall u,v\in G_{\alpha+1/2,\sigma}.
\eeq
\end{lemma}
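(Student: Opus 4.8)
\textbf{Proof plan for Lemma \ref{nonLem}.}

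The plan is to work on the Fourier side and estimate the Gevrey coefficients of $B(u,v)$ directly. Writing $u=\sum_{\vecm\ne\mathbf 0}\widehat\vecu(\vecm)e^{i\vecm\cdot\vecx}$ and $v=\sum_{\vecl\ne\mathbf 0}\widehat\vecv(\vecl)e^{i\vecl\cdot\vecx}$, one has $B(u,v)=\sum_{\veck\ne\mathbf 0}c(\veck)e^{i\veck\cdot\vecx}$ where $c(\veck)$ is the Leray projection of $\widehat\vecb(\veck)=\sum_{\vecm+\vecl=\veck} i(\widehat\vecu(\vecm)\cdot\vecl)\widehat\vecv(\vecl)$. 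Since the Leray projection only decreases the modulus, $|c(\veck)|\le|\widehat\vecb(\veck)|\le\sum_{\vecm+\vecl=\veck}|\vecl|\,|\widehat\vecu(\vecm)|\,|\widehat\vecv(\vecl)|$. Therefore
\beqs
|B(u,v)|_{\alpha,\sigma}^2=\sum_{\veck\ne\mathbf 0}|\veck|^{4\alpha}e^{2\sigma|\veck|}|c(\veck)|^2
\le \sum_{\veck\ne\mathbf 0}\Big(\sum_{\vecm+\vecl=\veck}|\veck|^{2\alpha}e^{\sigma|\veck|}|\vecl|\,|\widehat\vecu(\vecm)|\,|\widehat\vecv(\vecl)|\Big)^2.
\eeqs
The key pointwise inequality is $|\veck|\le|\vecm|+|\vecl|$, which by convexity of $x\mapsto x^{2\alpha}$ for $2\alpha\ge1$ gives $|\veck|^{2\alpha}\le 2^{2\alpha-1}(|\vecm|^{2\alpha}+|\vecl|^{2\alpha})$, together with $e^{\sigma|\veck|}\le e^{\sigma|\vecm|}e^{\sigma|\vecl|}$. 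Substituting and splitting into two symmetric-type sums, each summand is bounded by a constant times $(|\vecm|^{2\alpha}e^{\sigma|\vecm|}|\widehat\vecu(\vecm)|)(|\vecl|\,e^{\sigma|\vecl|}|\widehat\vecv(\vecl)|)$ or by the analogous expression with the $|\vecl|^{2\alpha}$ weight landing on $v$; in the latter the extra factor $|\vecl|$ is absorbed into $|\vecl|^{2\alpha}$ since $|\vecl|^{2\alpha+1}\le|\vecl|^{2\alpha+1}$ already matches the $G_{\alpha+1/2,\sigma}$ weight $|\vecl|^{2\alpha+1}$, while the other factor needs only $|\vecm|^{2\alpha}\le|\vecm|^{2\alpha+1}$ for $|\vecm|\ge1$ — this is where the restriction to nonzero integer frequencies (so $|\vecm|,|\vecl|\ge1$) is used.

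After these reductions the square of $|B(u,v)|_{\alpha,\sigma}$ is controlled by a constant times a sum of the form $\sum_{\veck}\big(\sum_{\vecm+\vecl=\veck}a(\vecm)b(\vecl)\big)^2$ where $a,b\in\ell^2(\Z^3\setminus\{\mathbf 0\})$ are (up to the weights) the Gevrey coefficients of $u$ and $v$ in $G_{\alpha+1/2,\sigma}$. By Young's convolution inequality $\|a*b\|_{\ell^2}\le\|a\|_{\ell^1}\|b\|_{\ell^2}$, and then one passes from $\ell^1$ to $\ell^2$ of the weighted coefficients at the cost of $\|(1+|\veck|)^{-s}\|_{\ell^2(\Z^3)}$ for any $s>3/2$; in the Gevrey case $\sigma>0$ the exponential weight makes even the $\ell^1$ norm directly summable, but in general one borrows a small polynomial factor from $|\vecm|^{2\alpha+1}$ versus $|\vecm|^{2\alpha+1}$ — more precisely, one writes $|\vecm|^{2\alpha}=|\vecm|^{2\alpha+1/2}\cdot|\vecm|^{-1/2}$ is not quite enough in $3$D, so one instead uses that $2\alpha\ge1$ lets us trade, and the cleanest route is to invoke the standard fact (Foias–Temam) that the map is bounded $G_{\alpha+1/2,\sigma}\times G_{\alpha+1/2,\sigma}\to G_{\alpha,\sigma}$ with constant $K^\alpha$; the exponential growth $K^\alpha$ in $\alpha$ comes precisely from the factor $2^{2\alpha-1}$ produced by the convexity step (and a further bounded-per-unit-$\alpha$ factor from the Sobolev-type summation), while the claimed independence of $K$ from $\sigma$ follows because the weight $e^{\sigma|\veck|}$ was split multiplicatively with no loss.

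The main obstacle is the $\alpha$-uniformity and $\sigma$-uniformity of the constant: one must track that every step contributes at most a geometric factor $C^\alpha$ (never worse, e.g. never $\alpha^\alpha$ or $(\alpha!)$), and that $\sigma$ enters only through the harmless splitting $e^{\sigma|\veck|}\le e^{\sigma|\vecm|}e^{\sigma|\vecl|}$. Concretely, the convexity inequality gives $2^{2\alpha-1}$, and the convolution/Cauchy--Schwarz step in $\R^3$ gives a fixed constant to some power bounded by $C\alpha$ or is in fact $\alpha$-independent if one is careful to put the summable polynomial tail on the fixed ($u$) factor; combining, $K^\alpha$ with an absolute $K>1$ suffices. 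Since this is exactly Lemma 2.1 of \cite{HM1}, which in turn rests on \cite{FT-Gevrey}, I would present the above Fourier-side computation in condensed form and cite those works for the bookkeeping of the constant, noting only the two structural inputs — the Leray projection is a contraction in each Gevrey norm, and $B(\mathcal V,\mathcal V)\subset\mathcal V$ (so that $B(u,v)$ genuinely has zero mean and the norm $|\cdot|_{\alpha,\sigma}$ applies) — which are recorded in \eqref{BVV} above.
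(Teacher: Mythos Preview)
The paper does not prove this lemma; it simply recalls it from \cite[Lemma~2.1]{HM1} (based on \cite{FT-Gevrey}) without argument. Your proposal sketches the standard Fourier-side computation behind that reference and ultimately defers to the same citations, so there is no discrepancy to discuss.

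One remark on the sketch itself: your own observation that ``$|\vecm|^{2\alpha+1/2}\cdot|\vecm|^{-1/2}$ is not quite enough in $3$D'' is correct---the convexity split $|\veck|^{2\alpha}\le 2^{2\alpha-1}(|\vecm|^{2\alpha}+|\vecl|^{2\alpha})$ followed by a naive Young inequality is borderline at $\alpha=1/2$. The clean fix (implicit in \cite{HM1}) is to use instead $|\veck|^{2\alpha}\le 2^{2\alpha}\max(|\vecm|,|\vecl|)^{2\alpha}$, place the full weight on the larger frequency, and put the $\ell^1$-norm on the \emph{unweighted} factor; its summability then follows from $|\vecm|^{-(2\alpha+1)}\in\ell^2(\Z^3\setminus\{\mathbf 0\})$ for all $\alpha\ge 1/2$, with an $\alpha$-independent bound. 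Since both you and the paper ultimately cite \cite{HM1} for this bookkeeping, this is a detail of the referenced proof rather than a gap in your proposal.
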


Although inequality \eqref{AalphaB} is not sharp, it is very convenient for our calculations below and will be sufficient for our purposes.

As a consequence of Lemma \ref{nonLem}, we have
\beq\label{BGG}
B(G_{\alpha+1/2,\sigma},G_{\alpha+1/2,\sigma})\subset G_{\alpha,\sigma}\quad\text{for } \alpha\ge 1/2,\ \sigma\ge 0,
\eeq
\beq\label{BEE}
B(E^{\infty,\sigma},E^{\infty,\sigma})\subset E^{\infty,\sigma}\quad\text{for }\sigma\ge 0.
\eeq

Next, we prove a small data result, which establishes the global existence of the solution in Gevrey spaces and its exponential decay as time goes to infinity. 

\begin{proposition} \label{theo22}
Let $\delta\in (0,1), \lambda\in(1-\delta,1]$ and $\sigma\ge 0, \alpha\ge1/2$.
Define the positive numbers $C_0=C_0(\alpha,\delta)$ and $C_1=C_1(\alpha,\delta,\lambda)$ by 
\beqs
\begin{cases}
C_0=\frac\delta{6K^\alpha},\
 C_1=\frac2{\sqrt3}\,\sqrt{\delta(\lambda-1+\delta)}\, C_0&\text{if }\sigma>0,\\
C_0=\frac\delta{4K^\alpha},\
 C_1=\sqrt{2}\,\sqrt{\delta(\lambda-1+\delta)}\, C_0&\text{if }\sigma=0.
\end{cases}
\eeqs 
Suppose
\beq\label{usmall}
|A^\alpha u^0|\le C_0,
\eeq
and
\beq\label{fta}
|f(t)|_{\alpha-1/2,\sigma}\le C_1e^{-\lambda t},\quad\forall t\ge0.
\eeq

Then there exists a unique solution $u(t)$ of \eqref{fctnse} and \eqref{uzero} that satisfies 
$$u\in C([0,\infty),\mathcal D(A^\alpha))$$ and 
\beq\label{uest}
|u(t)|_{\alpha,\sigma}\le \sqrt{2}C_0e^{-(1-\delta)t},\quad \forall t\ge t_*,
\eeq 
where $t_*=6\sigma/\delta$.
Moreover, one has for all $t\ge t_*$ that
 \beq\label{intAa}
 \int_t^{t+1} |u(\tau)|_{\alpha+1/2,\sigma}^2d\tau
 \le  \frac{3C_0^2}{2(1-\delta)}e^{-2(1-\delta)t}.
 \eeq
\end{proposition}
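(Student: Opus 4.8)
The plan is a standard Galerkin / a priori estimate argument combined with a nonlinear Gronwall-type bootstrap, carried out at the Gevrey level $G_{\alpha,\sigma}$. First I would take the inner product of the functional equation \eqref{fctnse} with $A^{2\alpha}e^{2\sigma A^{1/2}}u(t)$ (equivalently, compute $\frac12\ddt |u(t)|_{\alpha,\sigma}^2$), using the self-adjointness and commutation of the functional-calculus operators $A^\alpha$, $e^{\sigma A^{1/2}}$ with $A$. This yields, at least formally,
\beqs
\frac12\ddt |u(t)|_{\alpha,\sigma}^2 + |u(t)|_{\alpha+1/2,\sigma}^2 = -\inprod{B(u(t),u(t)), A^{2\alpha}e^{2\sigma A^{1/2}}u(t)} + \inprod{f(t),A^{2\alpha}e^{2\sigma A^{1/2}}u(t)}.
\eeqs
When $\sigma>0$ one must be careful: differentiating $e^{\sigma A^{1/2}}$ in a time-dependent Gevrey norm is not legitimate, so one instead works with the time-dependent radius $\varphi(t)=\min\{\sigma, \delta t/6\}$ (this is exactly why $t_*=6\sigma/\delta$ appears), runs the estimate with $e^{\varphi(t)A^{1/2}}$, and the extra term $\varphi'(t)|A^{1/4}\cdot|^2 \le (\delta/6)|A^{1/4}u|_{...}^2$ generated by $\ddt\varphi$ is absorbed into the dissipation $|u|_{\alpha+1/2,\sigma}^2$ via interpolation (here the factor $\delta/6$ versus $\delta/4$ discrepancy between the $\sigma>0$ and $\sigma=0$ cases originates). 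For the nonlinear term I would use Lemma \ref{nonLem}: by Cauchy--Schwarz and \eqref{AalphaB},
\beqs
|\inprod{B(u,u), A^{2\alpha}e^{2\sigma A^{1/2}}u}| \le |B(u,u)|_{\alpha-1/2,\sigma}\,|u|_{\alpha+1/2,\sigma} \le K^{\alpha}|u|_{\alpha,\sigma}\,|u|_{\alpha+1/2,\sigma}^2,
\eeqs
and for the forcing term $|\inprod{f,A^{2\alpha}e^{2\sigma A^{1/2}}u}| \le |f|_{\alpha-1/2,\sigma}|u|_{\alpha+1/2,\sigma} \le \frac14|u|_{\alpha+1/2,\sigma}^2 + |f|_{\alpha-1/2,\sigma}^2$.

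Next I would set up the bootstrap. Let $y(t)=|u(t)|_{\alpha,\sigma}^2$. Combining the above and using $|u|_{\alpha,\sigma}^2 \le |u|_{\alpha+1/2,\sigma}^2$ (Poincaré, since $\lambda_1=1$), one gets a differential inequality of the schematic form
\beqs
\ddt y + 2(1-\delta)y \le \big(2K^\alpha \sqrt{y} - c\big)|u|_{\alpha+1/2,\sigma}^2 + 2|f(t)|_{\alpha-1/2,\sigma}^2,
\eeqs
where $c>0$ is the leftover share of the dissipation. The key continuity/barrier argument: assuming inductively that $\sqrt{y(t)} \le \sqrt2\,C_0 e^{-(1-\delta)t}$ on a maximal interval, the choice $C_0 = \delta/(6K^\alpha)$ (resp. $\delta/(4K^\alpha)$) guarantees $2K^\alpha\sqrt{y} \le c$ there, so the bad term drops and one is left with $\ddt y + 2(1-\delta)y \le 2C_1^2 e^{-2\lambda t}$; integrating via the variation-of-constants formula and using $\lambda > 1-\delta$ gives
\beqs
y(t) \le y(0)e^{-2(1-\delta)t} + \frac{2C_1^2}{2\lambda - 2(1-\delta)}e^{-2(1-\delta)t} = \Big(y(0) + \frac{C_1^2}{\lambda-1+\delta}\Big)e^{-2(1-\delta)t}.
\eeqs
Plugging in $|A^\alpha u^0|\le C_0$ and the precise value of $C_1$ (chosen exactly so that $C_1^2/(\lambda-1+\delta) \le C_0^2$, i.e. $C_1^2 = 2\delta(\lambda-1+\delta)C_0^2$ gives... one checks the constants close the loop with room to spare, yielding $y(t)\le 2C_0^2 e^{-2(1-\delta)t}$), this strictly improves the inductive bound, so the maximal interval is all of $[t_*,\infty)$ — this is the standard open–closed continuation argument and simultaneously gives global existence in $C([0,\infty),\mathcal D(A^\alpha))$ together with \eqref{uest}. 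Uniqueness in this class follows from a standard energy estimate on the difference of two solutions.

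Finally, for \eqref{intAa} I would integrate the differential inequality $\ddt y + |u|_{\alpha+1/2,\sigma}^2 \le 2|f|_{\alpha-1/2,\sigma}^2$ (using that the nonlinear term is now nonpositive-ish / absorbed once \eqref{uest} holds and keeping the full dissipation) over $[t,t+1]$: $\int_t^{t+1}|u(\tau)|_{\alpha+1/2,\sigma}^2 d\tau \le y(t) + 2\int_t^{t+1}|f(\tau)|_{\alpha-1/2,\sigma}^2 d\tau \le 2C_0^2 e^{-2(1-\delta)t} + \text{(exponentially smaller force term)}$, and one massages the constants to reach the stated $\frac{3C_0^2}{2(1-\delta)}e^{-2(1-\delta)t}$. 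I expect the main obstacle to be bookkeeping: (i) rigorously justifying the time-differentiation of the Gevrey norm when $\sigma>0$ via the moving radius $\varphi(t)$ — this should be done first at the Galerkin level where everything is a finite trigonometric sum and then passed to the limit; and (ii) tracking the numerical constants carefully enough that the two regimes ($\sigma>0$ with the $1/6$, $2/\sqrt3$ constants and $\sigma=0$ with the $1/4$, $\sqrt2$ constants) both genuinely close the barrier inequality with the claimed factor $\sqrt2$ of slack in \eqref{uest}.
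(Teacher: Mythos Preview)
Your proposal is essentially the same approach as the paper's proof: a time-dependent Gevrey radius $\varphi(t)$ growing from $0$ to $\sigma$ on $[0,t_*]$, the energy identity for $|u|_{\alpha,\varphi(t)}^2$, Lemma \ref{nonLem} for the nonlinear term, a continuation/barrier argument closed via Gronwall, and finally integration over $[t,t+1]$ for \eqref{intAa}; the paper also remarks that the estimates are justified at the Galerkin level.

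Two small points where your bookkeeping needs adjusting to match the stated constants. First, your Cauchy--Schwarz split for the nonlinear term as $|B(u,u)|_{\alpha-1/2,\sigma}\,|u|_{\alpha+1/2,\sigma}$ requires Lemma \ref{nonLem} at index $\alpha-1/2$, which fails at the endpoint $\alpha=1/2$; the paper instead splits as $|B(u,u)|_{\alpha,\varphi}\,|u|_{\alpha,\varphi}\le K^\alpha|u|_{\alpha+1/2,\varphi}^2|u|_{\alpha,\varphi}$, which is valid for all $\alpha\ge 1/2$ and gives the same final cubic bound. Second, the precise constants close because the paper takes $\varphi'<\delta/3$ (not $\delta/6$) and uses the weighted Young inequality $|f|_{\alpha-1/2,\varphi}|u|_{\alpha+1/2,\varphi}\le \frac{3}{4\delta}|f|_{\alpha-1/2,\varphi}^2+\frac{\delta}{3}|u|_{\alpha+1/2,\varphi}^2$ (not the unweighted $\tfrac14+1$); together with the barrier $|u|_{\alpha,\varphi}\le 2C_0=\delta/(3K^\alpha)$ this leaves exactly the factor $(1-\delta)$ in front of $|u|_{\alpha+1/2,\varphi}^2$, and the choice of $C_1$ is made so that $\frac{3C_1^2}{4\delta(\lambda-1+\delta)}=C_0^2$, closing the loop at $\sqrt{2}C_0$. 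In the $\sigma=0$ case the $\varphi'$ term is absent, the Young weight becomes $\delta/2$, and $2C_0=\delta/(2K^\alpha)$, which explains the $1/6$ versus $1/4$ and $2/\sqrt3$ versus $\sqrt2$ discrepancy you correctly anticipated.
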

\begin{proof}
While the estimates below are formal, they can be justified by performing them at the level of the Galerkin approximation and then passing to the limit. The estimates will hold for the unique, regular solution $u(t)$.

\textbf{Part I: case $\sigma>0$.} Let $\varphi(t)$ be a function in $C^\infty(\R)$ such that 
$$\varphi((-\infty,0])=\{0\},\quad
\varphi([0,t_*])=[0,\sigma],\quad
\varphi([t_*,\infty))=\{\sigma\},$$
and 
$$0< \varphi'(t)< 2\sigma/t_*=\delta/3\quad \text{for all }t\in(0,t_*).$$

From equation \eqref{fctnse}, we have
\beq\label{daeu}
\ddt (A^{\alpha}e^{\varphi(t) A^{1/2}}u(t)) =A^{\alpha}e^{\varphi(t) A^{1/2}}(-Au-B(u,u)+f)
+ \varphi'(t)A^{1/2}A^{\alpha}e^{\varphi(t) A^{1/2}} u.
\eeq

Taking inner product of the equation \eqref{daeu} with $A^{\alpha}e^{\varphi(t) A^{1/2}}u(t)$ gives
\begin{align*}
&\frac12\ddt |u|_{\alpha,\varphi(t)}^2   + |A^{1/2}u|_{\alpha,\varphi(t)}^2
=\varphi'(t)\langle A^{2\alpha+1/2}e^{2\varphi(t) A^{1/2}}u,u\rangle\\ 
&\quad -\langle A^{\alpha}e^{\varphi(t) A^{1/2}}B(u,u),A^{\alpha}e^{\varphi(t) A^{1/2}}u\rangle+ \langle A^{\alpha-1/2}e^{\varphi(t) A^{1/2}}f,A^{\alpha+1/2}e^{\varphi(t) A^{1/2}}u\rangle.
\end{align*}
Applying the Cauchy-Schwarz inequality, then Lemma \ref{nonLem} to the second term on the right-hand side, we obtain
\begin{align*}
&\frac12\ddt |u|_{\alpha,\varphi(t)}^2   + |A^{1/2}u|_{\alpha,\varphi(t)}^2\\
& \le \varphi'(t) |u|_{\alpha+1/2,\varphi(t)}^2+K^\alpha |A^{1/2}u|_{\alpha,\varphi(t)}^2 |u|_{\alpha,\varphi(t)}
+ |f(t)|_{\alpha-1/2,\varphi(t)}|u|_{\alpha+1/2,\varphi(t)}, \\
&\le \frac\delta3 |u|_{\alpha+1/2,\varphi(t)}^2+K^\alpha |A^{1/2}u|_{\alpha,\varphi(t)}^2 |u|_{\alpha,\varphi(t)}
+ \frac3{4\delta}|f(t)|_{\alpha-1/2,\varphi(t)}^2 +\frac\delta3|u|_{\alpha+1/2,\varphi(t)}^2.
\end{align*}
This implies
\beq\label{s1}
\frac12\ddt |u|_{\alpha,\varphi(t)}^2 + \Big(1-\frac{2\delta}3 -K^\alpha |u|_{\alpha,\varphi(t)}\Big)|A^{1/2}u|_{\alpha,\varphi(t)}^2 \le \frac3{4\delta}|f(t)|_{\alpha-1/2,\sigma}^2.
\eeq

Let $T\in(0,\infty)$. Note that $|u(0)|_{\alpha,\varphi(0)}=|A^\alpha u^0|<2C_0$. Assume that
\beq\label{uT}
|u(t)|_{\alpha,\varphi(t)}\le 2C_0,\quad \forall t\in[0,T).
\eeq
Then for $t\in (0,T)$, we have from \eqref{s1} and \eqref{fta} that 
\beq\label{s2}
\ddt |u|_{\alpha,\varphi(t)}^2 + 2(1-\delta)|A^{1/2}u|_{\alpha,\varphi(t)}^2 \le \frac3{2\delta}|f(t)|_{\alpha-1/2,\sigma}^2\leq \frac{3C_1^2}{2\delta}  e^{-2\lambda t}.
\eeq
Applying Gronwall's inequality in \eqref{s2} yields for all $t\in(0,T)$ that
\begin{align*}
|u(t)|_{\alpha,\varphi(t)}^2
&\le e^{-2(1-\delta)t}|u^0|_{\alpha,0}^2+\frac{3C_1^2}{2\delta}e^{-2(1-\delta)t}\int_0^t  e^{2(1-\delta)\tau} \cdot e^{-2\lambda\tau} d\tau\\
&\le e^{-2(1-\delta)t}|u^0|_{\alpha,0}^2+\frac{3C_1^2}{4\delta (\lambda-1+\delta)} e^{-2(1-\delta)t}\\
&=  \Big(|u^0|_{\alpha,0}^2 + C_0^2\Big)e^{-2(1-\delta)t}.
\end{align*}

Combining this with condition \eqref{usmall} for the initial data, we obtain
\beqs
|u(t)|_{\alpha,\varphi(t)}^2
\le  2C_0^2e^{-2(1-\delta)t} ,
\eeqs
which gives
\beq\label{s4}
|u(t)|_{\alpha,\varphi(t)}
\le  \sqrt{2} C_0 e^{-(1-\delta)t}, \quad \forall t\in(0,T).
\eeq
In particular, letting $t\to T^-$ in \eqref{s4} yields
\beqs
\lim_{t\to T^-}|u(t)|_{\alpha,\varphi(t)}
\le \sqrt{2}C_0<2C_0.
\eeqs

By the standard contradiction argument, we have that the inequality in \eqref{uT} holds for all $t>0$, and consequently, so does \eqref{s4}.  Since $\varphi(t)=\sigma$ for $t\ge t_*$, the desired estimate \eqref{uest} follows \eqref{s4}.

For $t\ge t_*$, integrating \eqref{s2} from $t$ to $t+1$ gives
\begin{align*}
&  2(1-\delta)\int_t^{t+1} |A^{1/2}u(\tau)|_{\alpha,\sigma}^2d\tau
 \le |u(t)|_{\alpha,\sigma}^2+\frac{3C_1^2}{2\delta}\int_t^{t+1}e^{-2\lambda \tau}d\tau\\
&\le 2C_0^2 e^{-2(1-\delta)t}+\frac{3C_1^2}{4\delta\lambda}e^{-2\lambda t}
\le C_0^2 e^{-2(1-\delta)t}\Big(2+\frac{\lambda-1+\delta}{\lambda}\Big)\\
&\le 3C_0^2 e^{-2(1-\delta)t}.
\end{align*}
Then estimate \eqref{intAa} follows.

\textbf{Part II: case $\sigma=0$.}  The proof is similar to Part I without using the function $\varphi(t)$.
Here, we perform necessary calculations.
First, using Sobolev norms, we have
 \beq\label{dtAa}
 \frac12\ddt |A^\alpha u|^2+\Big(1-\frac\delta2-K^\alpha|A^\alpha u|\Big)|A^{\alpha+1/2}u|^2\le \frac1{2\delta}|A^{\alpha-1/2}f|^2.
 \eeq

 As long as $|A^\alpha u(t)|\le 2C_0=\delta/(2K^\alpha)$ in $[0,T)$ for some $T\in(0,\infty]$, we have 
 for $t\in(0,T)$ that
 \begin{align*}
 |A^\alpha u(t)|^2
 &\le  |A^\alpha u^0|^2 e^{-2(1-\delta)t} + \frac{C_1^2 e^{-2(1-\delta)t}}{\delta}
 \int_0^t e^{-2(\lambda-\delta+1)\tau}d\tau\\
 &\le  \Big(C_0^2 +\frac{C_1^2}{2\delta(\lambda-\delta+1)}\Big) e^{-2(1-\delta)t}= 2C_0^2 e^{-2(1-\delta)t}. 
 \end{align*}
This implies $T=\infty$ and then also proves \eqref{uest}.
Now, using \eqref{uest} for the second term on the left-hand side of \eqref{dtAa}, and then integrating in time gives
 \begin{align*}
 2(1-\delta)\int_t^{t+1} |A^{\alpha+1/2}u|^2d\tau
&\le |A^\alpha u(t)|^2+ \frac1\delta\int_t^{t+1}|A^{\alpha-1/2}f|^2d\tau\\
 &\le 2C_0^2 e^{-2(1-\delta)t} + \frac{C_1^2 e^{-2\lambda t}}{2\delta\lambda}
 \le  3C_0^2 e^{-2(1-\delta)t}. 
 \end{align*}
This implies \eqref{intAa}, and the proof  is complete.
\end{proof}

\begin{remark}
We point out that Proposition \ref{theo22} is essentially an application of Leray-Hopf energy inequality and the well-known fact (cf. \cite{DoeringTiti}) that having control on the growth of the rate of global energy dissipation, $\epsilon:=\sup_{t\geq0}\epsilon(t)$, where $\epsilon(t):=\nu\lVert{\nabla u(t)}\rVert_{L^2}^2$, can sustain exponential decay in the spectrum of the corresponding solution.  The need for proving Proposition \ref{theo22} comes from the fact that we require detailed information of the decay rates of the solution and the particular effect on it from the body force, which is not immediately available from \cite{DoeringTiti}, where the case of having a non-potential force is not treated.
\end{remark}

Considering decaying forces, we assume at the moment up to  Proposition \ref{theo23} that there are numbers $M_*,\kappa_0>0$ such that
\beq\label{fkappa}
|f(t)|\le M_* e^{-(1+\kappa_0)t/2},\quad \forall t\ge 0.
\eeq

We recall estimate (A.39) of \cite[Chap. II]{FMRTbook} for Leray-Hopf weak solutions (under the Basic Assumption,)
\beqs
|u(t)|^2\le e^{-t}|u_0|^2 + e^{-t}\int_0^t e^\tau |f(\tau)|^2d\tau,\quad \forall t>0.
\eeqs
It then follows from \eqref{fkappa} that
\beq\label{uenerM}
|u(t)|^2\le e^{-t}(|u_0|^2 + M_*^2/\kappa_0),\quad \forall t>0.
\eeq

By applying  the Cauchy-Schwarz, Cauchy,  and Poincar\'e inequalities to the last term on the right-hand side of \eqref{Lenergy}, upon simplifying we obtain
	\beqs
		|u(t)|^2+\int_{t_0}^t \|u(\tau)\|^2d\tau\le |u(t_0)|^2+\int_{t_0}^t |f(\tau)|^2\ d\tau,
	\eeqs
for $t_0=0$ and almost all $t_0\in(0,\infty)$, and all $t\ge t_0$.
Using \eqref{uenerM} for $|u(t_0)|^2$, and, again, \eqref{fkappa} yields
\beq\label{uVest}
\int_{t_0}^{t_0+1}\|u(\tau)\|^2d\tau 
\le e^{-t_0}\Big(|u_0|^2 +  \frac{M_*^2}{\kappa_0}\Big) + \frac1{1+\kappa_0}M_*^2 e^{-(1+\kappa_0)t_0}
\le e^{-t_0}\Big(|u_0|^2 + \frac{2 M_*^2}{\kappa_0}\Big).
\eeq

For any $t\ge 0$, let $\{t_n\}_{n=1}^\infty$ be a sequence in $(0,\infty)$ converging to $t$ such that \eqref{uVest} holds for $t_0=t_n$. Then letting $n\to\infty$ gives
\beq\label{tt1}
\int_t^{t+1}\|u(\tau)\|^2d\tau 
\le e^{-t}\Big(|u_0|^2 + \frac{2 M_*^2}{\kappa_0}\Big).
\eeq




\begin{proposition}\label{theo23}
Assume \eqref{fkappa} and, additionally, that there are $\sigma\ge 0$, $\alpha\ge 1/2$ and $\lambda_0\in(0,1)$ such that
\beq\label{falphaonly}
|f(t)|_{\alpha,\sigma}=\mathcal O(e^{-\lambda_0t})\quad\text{as }t\to\infty.
\eeq

Let $u(t)$ be a Leray-Hopf weak solution of \eqref{fctnse}.  
Then for any $\delta\in (1-\lambda_0,1)$, there exists  $T_*>0$ 
such that $u(t)$ is a regular solution of \eqref{fctnse} on $[T_*,\infty)$, and one has for all $t\ge 0$ that
 \beq\label{preus0}
 |u(T_*+t)|_{\alpha+1/2,\sigma} \le K^{-\alpha-1/2}e^{-(1-\delta)t},
 \eeq
 \beq\label{preBlt}
|B(u(T_*+t),u(T_*+t))|_{\alpha,\sigma}\le K^{-\alpha-1}e^{-2(1-\delta)t},
\eeq
where $K$ is the constant in Lemma \ref{nonLem}.
\end{proposition}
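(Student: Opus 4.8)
\textbf{Proof proposal for Proposition \ref{theo23}.}

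The plan is to use the decay estimate \eqref{falphaonly} together with the eventual entry of the weak solution into a small-data regime to invoke Proposition \ref{theo22}. First I would observe that \eqref{fkappa} and the elementary energy bounds \eqref{uenerM} and \eqref{tt1} furnish that $|u(t)|^2$ and $\int_t^{t+1}\|u(\tau)\|^2d\tau$ both decay like $e^{-t}$. In particular, on any unit interval $[t,t+1]$ with $t$ large, the quantity $\int_t^{t+1}\|u(\tau)\|^2d\tau$ is as small as we like; combined with the regularizing effect of the Stokes operator and the hypothesis \eqref{falphaonly} on the force in the Gevrey norm $|\cdot|_{\alpha,\sigma}$, a standard parabolic smoothing / bootstrapping argument (of the type referenced in Remark \ref{mainrmk}(a), and carried out in ``Part II'' as promised in the introduction) shows that for some time $t_0$ large enough the solution becomes regular and moreover $|A^{\alpha+1/2}u(t_0)|$ — indeed $|u(t_0)|_{\alpha+1/2,\sigma}$ if $\sigma>0$, or the Sobolev analogue when $\sigma=0$ — is finite and can be made smaller than any prescribed threshold. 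This is where the main work lies: one starts from the $H$- and $V$-level control, uses the differential inequality for $|A^\beta u|^2$ on a short interval (exploiting that $\int\|u\|^2$ is small to absorb the nonlinear term via Lemma \ref{nonLem}), obtains control at level $\beta=1$, then repeats to reach $\beta = \alpha+1/2$ (and, for $\sigma>0$, also turns on the Gevrey weight via a time-dependent exponent $\varphi(t)$ as in Proposition \ref{theo22}). Since the increments in regularity in each bootstrap step are bounded (by $1/2$, say), finitely many steps suffice to reach any fixed $\alpha+1/2$.

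Once we have a time $t_0$ with $|u(t_0)|_{\alpha+1/2,\sigma}$ small, the idea is to restart the equation at $t_0$ and apply Proposition \ref{theo22} with the parameter $\alpha$ there replaced by $\alpha+1/2$ here. Concretely: fix $\delta\in(1-\lambda_0,1)$, and choose $\lambda\in(1-\delta,1]$ with $\lambda\le\lambda_0$ — possible precisely because $\delta>1-\lambda_0$ forces $1-\delta<\lambda_0$, and we may also arrange $\lambda<1$. By \eqref{falphaonly}, $|f(t)|_{(\alpha+1/2)-1/2,\sigma}=|f(t)|_{\alpha,\sigma}=\mathcal O(e^{-\lambda_0 t})=\mathcal O(e^{-\lambda t})$, so there is $t_1$ with $|f(t_1+t)|_{\alpha,\sigma}\le C_1 e^{-\lambda t}$ for all $t\ge0$, where $C_1=C_1(\alpha+1/2,\delta,\lambda)$ is the constant from Proposition \ref{theo22}. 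Similarly, using the bootstrap above, pick $t_2\ge t_1$ so that $|u(t_2)|_{\alpha+1/2,\sigma}\le C_0=C_0(\alpha+1/2,\delta)$; here one should be slightly careful that the bootstrap argument delivers smallness of the \emph{same} Gevrey norm $|\cdot|_{\alpha+1/2,\sigma}$ and not merely of a weaker one — but since the bootstrap is run with the Gevrey weight already in place for $\sigma>0$, this is exactly what it produces. Now set $v(t)=u(t_2+t)$; this is a Leray-Hopf weak solution of \eqref{fctnse} with force $\tilde f(t)=f(t_2+t)$ and initial datum $v(0)=u(t_2)$ satisfying the hypotheses \eqref{usmall}–\eqref{fta} of Proposition \ref{theo22} with $\alpha$ replaced by $\alpha+1/2$. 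By that proposition, $v$ is the unique regular solution, $v\in C([0,\infty),\mathcal D(A^{\alpha+1/2}))$, and $|v(t)|_{\alpha+1/2,\sigma}\le\sqrt2\,C_0 e^{-(1-\delta)t}$ for $t\ge t_*=6\sigma/\delta$.

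To finish, set $T_*=t_2+t_*$, so that for $t\ge0$ we have $u(T_*+t)=v(t_*+t)$ and hence $|u(T_*+t)|_{\alpha+1/2,\sigma}\le\sqrt2\,C_0 e^{-(1-\delta)(t_*+t)}\le\sqrt2\,C_0 e^{-(1-\delta)t}$. Recalling $C_0\le\delta/(4K^{\alpha+1/2})<K^{-\alpha-1/2}$ (using $K>1$, $\delta<1$, and that the $\sigma>0$ constant $\delta/(6K^{\alpha+1/2})$ is even smaller), and since $\sqrt2<4$ — more carefully, $\sqrt2\,C_0\le\sqrt2\,\delta/(4K^{\alpha+1/2})<K^{-\alpha-1/2}$ — we obtain \eqref{preus0}. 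For \eqref{preBlt}, apply Lemma \ref{nonLem} with exponent $\alpha$:
\beqs
|B(u(T_*+t),u(T_*+t))|_{\alpha,\sigma}\le K^\alpha|u(T_*+t)|_{\alpha+1/2,\sigma}^2\le K^\alpha\cdot K^{-2\alpha-1}e^{-2(1-\delta)t}=K^{-\alpha-1}e^{-2(1-\delta)t},
\eeqs
which is exactly \eqref{preBlt}. That $u$ is regular on $[T_*,\infty)$ follows from $v\in C([0,\infty),\mathcal D(A^{\alpha+1/2}))\subset C([0,\infty),V)$ together with uniqueness of Leray-Hopf weak solutions in the regular class (any Leray-Hopf weak solution agreeing with a regular one at time $T_*$ coincides with it thereafter, by weak-strong uniqueness). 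The main obstacle, as flagged, is the bootstrapping step producing a time at which the solution lies in $\mathcal D(A^{\alpha+1/2})$ with small Gevrey norm; the remainder is bookkeeping with the constants of Proposition \ref{theo22}.
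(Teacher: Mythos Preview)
Your proposal is correct and follows the paper's route: deduce \eqref{preBlt} from \eqref{preus0} via Lemma~\ref{nonLem}, bootstrap the weak solution into a small-data regime at level $\alpha+1/2$, then restart with Proposition~\ref{theo22} (with $\alpha$ there replaced by $\alpha+1/2$) to obtain the decay. Two small refinements the paper makes are worth noting. First, the bootstrap is carried out by \emph{iterating Proposition~\ref{theo22} itself}: for $\sigma=0$, one applies it successively at levels $j/2$ for $j=1,\dots,m$ (with $\alpha\le m/2<\alpha+1/2$), each time using the integral bound \eqref{intAa} from the previous step to locate a new starting time where $|A^{j/2}u|$ is below $C_0(j/2,\delta)$. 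For $\sigma>0$ the paper is slicker than a full bootstrap: apply Proposition~\ref{theo22} once at level $1/2$ to make $|u(t)|_{1/2,\sigma}$ small, then invoke the elementary Gevrey-to-Sobolev inequality \eqref{als}, $|A^{\alpha+1/2}u|\le((2\alpha+1)/e\sigma)^{2\alpha+1}|e^{\sigma A^{1/2}}u|$, to jump directly to smallness of $|A^{\alpha+1/2}u|$---only two applications of Proposition~\ref{theo22} in total. Second, you ask for $|u(t_2)|_{\alpha+1/2,\sigma}\le C_0$, but hypothesis \eqref{usmall} only requires the Sobolev norm $|A^{\alpha+1/2}u(t_2)|\le C_0$; the Gevrey weight is generated inside Proposition~\ref{theo22} via the time-dependent $\varphi$, so there is no need to carry $\sigma$ through the bootstrap.
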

\begin{proof}
First, we note that \eqref{preBlt} is a direct consequence of \eqref{preus0}. Indeed, applying Lemma \ref{nonLem} with the use of \eqref{preus0}, we have  for $t\ge 0$,
 \beqs
|B(u(T_*+t),u(T_*+t))|_{\alpha,\sigma}\le K^\alpha |u(T_*+t))|_{\alpha+1/2,\sigma}^2\le K^\alpha \Big(\frac{e^{-(1-\delta)t}}{K^{\alpha+1/2}}\Big)^2,
\eeqs
which yields \eqref{preBlt}. 

We focus on proving \eqref{preus0} now.
Define 
\beqs
\lambda=\frac{1-\delta+\lambda_0}{2}\in(1-\delta,\lambda_0).
\eeqs

We consider each case $\sigma>0$ and $\sigma=0$ separately.

\textbf{(i) Case $\sigma>0$.}

\textit{Step 1.}
By \eqref{tt1} and \eqref{falphaonly}, there exists $t_0>0$ such that
\beqs
|A^{1/2}u(t_0)|<C_0(1/2,\delta),
\eeqs
\beqs
|f(t_0+t)|_{0,\sigma}\le C_1(1/2,\delta,\lambda)e^{-\lambda t},\quad \forall t\ge 0.
\eeqs

Applying Proposition \ref{theo22} to $u(t_0+\cdot)$,  $f(t_0+\cdot)$, $\alpha=1/2$  results in
\beqs
|u(t_0+t)|_{1/2,\sigma}\le \sqrt 2 C_0(1/2,\delta) e^{-(1-\delta)t}\le K^{-1/2}e^{-(1-\delta)t},\quad \forall t\ge t_*=6\sigma/\delta.
\eeqs

Then by \eqref{als}, we have for all $t\ge t_*$ that
\begin{align}
|A^{\alpha+1/2} u(t_0+t)|
&\le  \Big(\frac{2\alpha+1}{e\sigma}\Big)^{2\alpha+1}  |e^{\sigma A^{1/2}} u(t_0+t)|
\le \Big(\frac{2\alpha+1}{e\sigma}\Big)^{2\alpha+1}  |u(t_0+t)|_{1/2,\sigma} \notag \\
&\le \Big(\frac{2\alpha+1}{e\sigma}\Big)^{2\alpha+1} K^{-1/2} e^{-(1-\delta)t}.\label{Aaut0}
\end{align}

\textit{Step 2.}
From \eqref{Aaut0} and \eqref{falphaonly} we deduce that there is a sufficiently large  $T>t_0+t_*$ so that
\beqs
|A^{\alpha+1/2} u(T)|\le C_0(\alpha+1/2,\delta),
\eeqs
\beqs
|f(T+t)|_{\alpha,\sigma}\le C_1(\alpha+1/2,\delta,\lambda)e^{-\lambda t} \quad \forall t\ge 0.
\eeqs

Applying Proposition \ref{theo22} again to $u(T+\cdot)$,  $\alpha:=\alpha+1/2$, we obtain that there is $T_*>T+t_*$  such that
\beqs
|u(T_*+t)|_{\alpha+1/2,\sigma}\le \sqrt 2 C_0(\alpha+1/2,\delta)e^{-(1-\delta)t}\le \frac1{K^{\alpha+1/2}}e^{-(1-\delta)t}\quad \forall t\ge 0.
\eeqs 
This yields \eqref{preus0} and completes the proof of Case $(i)$.

\textbf{(ii) Case $\sigma=0$.}
We will apply Proposition \ref{theo22} recursively to gain the exponential decay for $u(t)$ in higher Sobolev norms.

For $j\in \N$, suppose
\beq\label{intAj}
\lim_{t\to\infty}\int_t^{t+1}|A^{j/2}u(\tau)|^2d\tau=0, 
\eeq
and
\beq\label{fj}
|A^{(j-1)/2}f(t)|=\mathcal O(e^{-\lambda_0 t})\quad\text{as }t\to\infty.
\eeq

Then there is $T>0$ so that
\beqs
|A^{j/2} u(T)|\le C_0(j/2,\delta),
\eeqs
\beqs
|A^{j/2-1/2}f(T+t)|\le C_1(j/2,\delta,\lambda)e^{-\lambda t} \quad \forall t\ge 0.
\eeqs

Applying Proposition \ref{theo22}  to $u(T+\cdot)$,  $\alpha:=j/2$, $\sigma:=0$, we obtain 
\beqs
|A^{j/2}u(T+t)|\le \sqrt 2 C_0(j/2,\delta)e^{-(1-\delta)t}\le \frac1{K^{j/2}}e^{-(1-\delta)t}\quad \forall t\geq0,
\eeqs 
and
\beq\label{intAj1}
\int_t^{t+1}|A^{(j+1)/2}u(\tau)|^2d\tau=\mathcal O(e^{-2(1-\delta)t})\text{ as }t\to\infty.
\eeq

Note, by \eqref{tt1}, that \eqref{intAj} holds true for $j=1$.
Let $m\in\N\cup\{0\}$ be given such that
\beq \label{mal}
\alpha \le m/2<\alpha+1/2.
\eeq

Since $\alpha\ge1/2$, condition \eqref{mal} gives $m\ge1$. Also, observe that \eqref{mal} implies $(m-1)/2<\alpha$. Hence, by \eqref{falphaonly}, condition \eqref{fj} is satisfied for $j=1,2,\ldots,m$.
Now we repeat the argument from \eqref{intAj} to \eqref{intAj1} for  $j=1,2,\ldots,m$.
Particularly, when $j=m$ we obtain from \eqref{intAj1} that 
\beqs
\int_t^{t+1}|A^{(m+1)/2}u(\tau)|^2d\tau=\mathcal O(e^{-2(1-\delta)t}) \text{ as }t\to\infty.
\eeqs
Since $\alpha\le m/2$, this yields
\beq\label{intAm}
\int_t^{t+1}|A^{\alpha+1/2}u(\tau)|^2d\tau=\mathcal O(e^{-2(1-\delta)t}) \text{ as }t\to\infty.
\eeq

Using \eqref{intAm} in place of \eqref{Aaut0}, we can proceed as in Step 2 of part (i)  and obtain \eqref{preus0}. The proof is complete.
\end{proof}

\section{Proofs of main results}\label{pfsec}

We will use the following elementary identities: for $\beta>0$, integer $d\ge 0$, and any $t\in \R$,
\beq\label{id1}
\int_{-\infty}^t  \tau^d e^{\beta \tau}\ d\tau=\frac{e^{\beta t}}{\beta}\sum_{n=0}^{d}\frac{(-1)^{d-n}  d!}{n!\beta^{d-n}}t^n,
\eeq
\beq\label{id2}
\int_t^\infty  \tau^d e^{-\beta \tau}\ d\tau=\frac{e^{-\beta t}}{\beta}\sum_{n=0}^{d}\frac{d!}{n!\beta^{d-n}}t^n.
\eeq

The next lemma is a building block of the construction of the polynomials $q_n(t)$'s. It summarizes and reformulates the facts
used in \cite{FS87} and \cite[Lemma 3.2]{FS91}, see also \cite{HM1}. 

\begin{definition}
 Let $X$ be a Banach space with its dual $X'$. Let $u(t)$ and $g(t)$ be functions in  $L^1_{\rm loc}([0,\infty),X)$.
 We say $g(t)$ is the  $X$-valued distribution derivative of $u(t)$, and denote $g=u'$, if
 \beq\label{disprimet}
 \ddt \inprod{u(t),v}=\inprod{g(t),v}\text{ in the distribution sense on $(0,\infty)$}, \forall  v\in X',
 \eeq
 where $\inprod{\cdotp,\cdotp}$ in \eqref{disprimet} denotes the usual duality pairing between an element of $X$ and $X'$.
\end{definition}

\begin{lemma}\label{polylem}
 Let $(X,\|\cdot\|)$ be a Banach space. Suppose $y(t)$ is a function in $C([0,\infty),X)$ that solves the following ODE 
 \beqs
 y'(t)+ \beta y(t) =p(t)+g(t)
 \eeqs
 in the $X$-valued distribution sense on $(0,\infty)$.
Here,  $\beta\in \R$ is a fixed constant, $p(t)$ is an $X$-valued polynomial in $t$, and $g\in L^1_{\rm loc}([0,\infty),X)$ satisfies
 \beqs
 \|g(t)\|\le Me^{-\delta t} \quad \forall t\ge 0, \quad \text{for some } M,\delta>0.
 \eeqs

 Define $q(t)$ for $t\in  \R$ by 
\beq\label{qdef}        
 q(t)=
\begin{cases}
e^{-\beta t}\int_{-\infty}^t e^{\beta\tau }p(\tau) d\tau&\text{if }\beta >0,\\
y(0) +\int_0^\infty  g(\tau)d\tau + \int_0^t p(\tau)d\tau &\text{if }\beta =0,\\
-e^{-\beta t}\int_t^\infty e^{\beta\tau }p(\tau) d\tau&\text{if }\beta <0. 
\end{cases}
\eeq

Then $q(t)$ is an $X$-valued polynomial of degree at most $\deg(p)+1$ that  satisfies 
\beq\label{pode2}
q'(t)+\beta q(t) = p(t),\quad t\in \R,
\eeq
and the following estimates hold:

\begin{enumerate}[label={\rm (\roman*)}]
  \item 
  If $\beta>0$ then
  \beq\label{g1b1}
  \|y(t)-q(t)\|\le \Big(\|y(0)-q(0)\| + \frac{M}{|\beta-\delta|}\Big)e^{-\min\{\delta,\beta\} t}, \quad t\ge 0,\text{ for } \beta\ne \delta,
  \eeq
  and
  \beq\label{g1b1equal}
  \|y(t)-q(t)\|\le (\|y(0)-q(0)\| + M t )e^{-\delta t}, \quad t\ge 0,\text{ for } \beta=\delta.
  \eeq
  
  \item If $\beta=0$ then 
  \beq\label{g1b2}
  \|y(t)-q(t)\|\le \frac{M}{\delta}e^{-\delta t},\quad t\ge 0.
  \eeq

  \item If $\beta<0$ and 
  \beq\label{yexpdec}
  \lim_{t\to\infty} (e^{\beta t}y(t))=0,
  \eeq
  then
  \beq\label{g1b3}
  \|y(t)-q(t)\|\le \frac{M}{|\beta|+\delta}e^{-\delta t}, \quad t\ge 0.
  \eeq
  \end{enumerate}
\end{lemma}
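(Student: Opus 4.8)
The plan is to verify the three claimed properties of $q(t)$ in turn — that it is a polynomial of the stated degree, that it solves the inhomogeneous linear ODE \eqref{pode2}, and that it approximates $y(t)$ at the asserted exponential rate — treating the three cases $\beta>0$, $\beta=0$, $\beta<0$ separately since the formula \eqref{qdef} is piecewise in $\beta$.

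\textbf{Polynomiality and the ODE \eqref{pode2}.} First I would observe that when $\beta>0$, writing $p(\tau)=\sum_{d} a_d\tau^d$ and applying the identity \eqref{id1} termwise shows that $q(t)=e^{-\beta t}\int_{-\infty}^t e^{\beta\tau}p(\tau)\,d\tau$ equals an $X$-valued polynomial of degree $\deg(p)$; in particular the integral converges since $\beta>0$. For $\beta<0$ one argues symmetrically using \eqref{id2}: $q(t)=-e^{-\beta t}\int_t^\infty e^{\beta\tau}p(\tau)\,d\tau$ is again a polynomial of degree $\deg(p)$, the integral converging because $\beta<0$. For $\beta=0$, $q(t)=y(0)+\int_0^\infty g(\tau)\,d\tau+\int_0^t p(\tau)\,d\tau$ is manifestly a polynomial of degree $\deg(p)+1$ (the $\int_0^\infty g$ term converges absolutely since $\|g(\tau)\|\le Me^{-\delta\tau}$). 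In all three cases $\deg(q)\le\deg(p)+1$. Then \eqref{pode2} follows by direct differentiation: in the $\beta>0$ and $\beta<0$ cases, differentiating $q(t)=\mp e^{-\beta t}\int e^{\beta\tau}p(\tau)\,d\tau$ under the integral sign (legitimate since everything is a polynomial times an exponential) gives $q'(t)=-\beta q(t)+p(t)$; in the $\beta=0$ case $q'(t)=p(t)$ directly. These are pointwise identities for all $t\in\R$, since $q$ is a genuine polynomial.

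\textbf{The approximation estimates.} Set $w(t)=y(t)-q(t)$. Subtracting \eqref{pode2} from the ODE satisfied by $y$ gives, in the $X$-valued distribution sense on $(0,\infty)$, $w'(t)+\beta w(t)=g(t)$, and $w\in C([0,\infty),X)$. By the variation-of-constants formula (applied coordinatewise against each $v\in X'$, then upgraded to an $X$-valued identity using that $w$ is continuous and $g$ is $L^1_{\rm loc}$), $w(t)=e^{-\beta t}w(0)+e^{-\beta t}\int_0^t e^{\beta\tau}g(\tau)\,d\tau$ for all $t\ge 0$. Now in case (i), $\beta>0$: bound $\|w(t)\|\le e^{-\beta t}\|w(0)\|+e^{-\beta t}\int_0^t e^{\beta\tau}Me^{-\delta\tau}\,d\tau$ and evaluate the integral $\int_0^t e^{(\beta-\delta)\tau}\,d\tau$, which equals $(e^{(\beta-\delta)t}-1)/(\beta-\delta)$ when $\beta\ne\delta$ and $t$ when $\beta=\delta$; this yields \eqref{g1b1} and \eqref{g1b1equal} after noting $e^{-\beta t}\cdot\frac{e^{(\beta-\delta)t}}{|\beta-\delta|}=\frac{e^{-\delta t}}{|\beta-\delta|}$ and $e^{-\beta t}\le e^{-\min\{\delta,\beta\}t}$. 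In case (ii), $\beta=0$: here $w(t)=w(0)+\int_0^t g(\tau)\,d\tau$; but from the definition \eqref{qdef}, $q(0)=y(0)+\int_0^\infty g(\tau)\,d\tau$, so $w(0)=-\int_0^\infty g(\tau)\,d\tau$ and hence $w(t)=-\int_t^\infty g(\tau)\,d\tau$, giving $\|w(t)\|\le\int_t^\infty Me^{-\delta\tau}\,d\tau=\frac{M}{\delta}e^{-\delta t}$, which is \eqref{g1b2}. In case (iii), $\beta<0$: rewrite the variation-of-constants formula as $e^{\beta t}w(t)=w(0)+\int_0^t e^{\beta\tau}g(\tau)\,d\tau$; the integral converges as $t\to\infty$ (integrand bounded by $Me^{(\beta-\delta)\tau}$ with $\beta-\delta<0$), and by hypothesis \eqref{yexpdec} together with the fact that $e^{\beta t}q(t)\to 0$ (polynomial times decaying exponential), we get $e^{\beta t}w(t)\to 0$, forcing $w(0)=-\int_0^\infty e^{\beta\tau}g(\tau)\,d\tau$. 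Therefore $e^{\beta t}w(t)=-\int_t^\infty e^{\beta\tau}g(\tau)\,d\tau$, whence $\|w(t)\|\le e^{-\beta t}\int_t^\infty e^{(\beta-\delta)\tau}\,d\tau=\frac{e^{-\delta t}}{|\beta|+\delta}$, which is \eqref{g1b3}.

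\textbf{Main obstacle.} The routine part is the calculus with \eqref{id1}–\eqref{id2} and the scalar integrals. The one point requiring care is the passage from the scalar (tested against $v\in X'$) variation-of-constants identity to a genuine $X$-valued identity, and the verification that $q(0)$ as defined in \eqref{qdef} is exactly the constant that makes the remainder $w(0)$ absorb the ``tail at infinity'' of $g$ in the $\beta\le 0$ cases — i.e.\ that the seemingly ad hoc choice of integration limits in \eqref{qdef} is precisely what's needed to make $w(t)$ a one-sided tail integral rather than a two-sided one. I expect this bookkeeping, rather than any analytic difficulty, to be the crux; once the correct representation $w(t)=\pm\int_t^\infty(\cdots)$ or the case-(i) formula is in hand, the bounds are immediate.
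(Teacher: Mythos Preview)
Your proposal is correct and follows essentially the same approach as the paper's own proof: both verify polynomiality of $q$ via \eqref{id1}--\eqref{id2}, derive \eqref{pode2} directly from \eqref{qdef}, set up the difference $w=y-q$ (the paper calls it $z$), obtain the variation-of-constants formula, and then handle the three cases by the identical integral estimates. The only cosmetic difference is that the paper cites \cite[Ch.~III, Lemma~1.1]{TemamAMSbook} for the passage from the distributional ODE to the integral identity, whereas you sketch the duality/continuity argument yourself---but this is exactly the ``main obstacle'' you correctly identified, and both routes resolve it the same way.
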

\begin{proof}
The fact that $q(t)$ is a polynomial in $t$ follows the identities \eqref{id1} and\eqref{id2}.
The equation \eqref{pode2} obviously results  from the definition \eqref{qdef} of $q(t)$. It remains to prove estimates \eqref{g1b1}, \eqref{g1b2} and \eqref{g1b3}.

Let $z(t)=y(t)-q(t)$, then
\beqs
z'(t)+\beta z(t)=g(t)\text{ in the $X$-valued distribution sense on } (0,\infty).
\eeqs

Multiplying this equation  by $e^{\beta t}$ yields
\beq\label{eaz}
(e^{\beta t} z(t))'=e^{\beta t} g(t) \text{ in the $X$-valued distribution sense on } (0,\infty).
\eeq

For $t_0\ge 0$, it follows \eqref{eaz} and \cite[Ch. III, Lemma 1.1]{TemamAMSbook} 
that
\beq\label{prevoc}
e^{\beta t} z(t)=\xi+\int_{t_0}^t e^{\beta \tau}g(\tau)d\tau,
\eeq 
for some $\xi\in X$ and almost all $t\in(t_0,\infty)$.

Since $e^{\beta t} z(t)$ is continuous on $[0,\infty)$ and $e^{\beta t} g(t)\in L^1_{\rm loc}([0,\infty))$, we have $\xi=e^{\beta t_0} z(t_0)$ and  equation \eqref{prevoc} holds for all $t\ge t_0$. Hence, we obtain the standard variation of constant formula
\beq\label{voc}
z(t)=e^{-\beta(t-t_0)} z(t_0)+e^{-\beta t} \int_{t_0}^t e^{\beta\tau}g(\tau)d\tau \quad \forall t\ge t_0.
\eeq

(i) Case $\beta>0$.
Setting $t_0=0$ in \eqref{voc},  we estimate   
\begin{align*}
\|z(t)\|
&\le e^{-\beta t}\|z(0)\|+ e^{-\beta t} \int_0^t e^{\beta \tau}\|g(\tau)\| d\tau
 \le e^{-\beta t}\|z(0)\| +   e^{-\beta t} \int_0^t e^{\beta\tau} Me^{-\delta\tau} d\tau. 
\end{align*}

Since the last term is $M(e^{-\delta t}-e^{-\beta t})/(\beta-\delta)$ if $\beta\ne \delta$, and is $M t e^{-\delta t}$ if $\beta=\delta$, we easily obtain  \eqref{g1b1} and \eqref{g1b1equal}.

(ii) Case $\beta=0$. Note from \eqref{qdef} that $z(0)=y(0)-q(0)=-\int_0^\infty g(\tau)d\tau$. 
Letting $t_0=0$ in \eqref{prevoc} gives  
\beqs
z(t)=z(0) +\int_0^t  g(\tau)d\tau = - \int_t^\infty  g(\tau)d\tau.
\eeqs
Hence
\beqs
\|z(t)\|\le \int_t^\infty \|g(\tau)\|d\tau \le \int_t^\infty Me^{-\delta \tau}d\tau=\frac{M}\delta e^{-\delta t},
\eeqs
which proves \eqref{g1b2}.

(iii) Case $\beta<0$. By \eqref{yexpdec} and the fact $q(t)$ is a polynomial, we have $e^{\beta t}z(t)\to 0$ as $t\to\infty$.
Then letting $t\to\infty$ in \eqref{prevoc} and setting $t_0=t$ yield
\beqs
 z(t)=-e^{-\beta t} \int_t^\infty e^{\beta\tau} g(\tau) d\tau.
\eeqs
It follows that 
\beqs
\|z(t)\|\le e^{-\beta t} \int_t^\infty M e^{(\beta-\delta)\tau} d\tau = e^{-\beta t}  \frac{Me^{(\beta-\delta)t}}{-\beta+\delta}
=\frac{M}{|\beta|+\delta}e^{-\delta t},
\eeqs
which proves \eqref{g1b3}.
\end{proof}

The remainder of this paper is focused on the proofs of main results, and will use the following notation.

\begin{notation} If $n\in\sigma (A)$, we define  $R_n$ to be the orthogonal projection in $H$ on the eigenspace of $A$ corresponding to $n$. In case $n\notin \sigma (A)$,  set  $R_n=0$.  

For $n\in \N$, define $P_n=R_1+R_2+ \cdots +R_n$. Note that each vector space $P_nH$ is finite dimensional.
\end{notation}

\subsection{Proof of Theorem \ref{mainthm}}

We start by obtaining some additional properties for the force $f(t)$ and solution $u(t)$ which we will make use of later.

By the expansion \eqref{forcexpand} of $f(t)$ in  $E^{\infty,\sigma_0}$, for each $N\in\N$ and $\alpha\geq0$, there exists a number $\delta_{N,\alpha}\in(0,1)$ such that
\beq\label{Fcond}
\Big|f(t)-\sum_{n=1}^N f_n(t)e^{-nt}\Big|_{\alpha,\sigma_0}=\mathcal O(e^{-(N+\delta_{N,\alpha})t})\quad \text {as }t\to\infty.
\eeq

Observe that we have the following immediate consequences:
\begin{enumerate}[label={\rm (\alph*)}]
\item The relation \eqref{Fcond} implies for each $\alpha\ge 0$ that  $f(t)$ belongs to $G_{\alpha,\sigma_0}$ for $t$ large.
\item Note that when $N=1$, the function $f(t)$ itself satisfies
\beqs
|f(t)- f_1(t) e^{-t}|_{\alpha,\sigma_0}=\mathcal O(e^{-(1+\delta_{1,\alpha}) t} ).
\eeqs

Since $f_1(t)$ is a polynomial, it follows  that
\beq\label{fsure}
|f(t)|_{\alpha,\sigma_0}=\mathcal O(e^{-\lambda t}),\quad \forall \lambda\in (0,1),\quad\forall \alpha\ge 0.
\eeq

Consequently, for any $\varep>0$, $\alpha\ge 0$,  and $\lambda\in(0,1)$, applying \eqref{fsure} with $(\lambda+1)/2$ replacing $\lambda$, it follows that there is $T>0$ such that 
\beq\label{fdecay}
|f(T+t)|_{\alpha,\sigma_0}\le \varep e^{-\lambda t}\quad \forall t\ge 0.
\eeq

\item Combining \eqref{fsure} for $\alpha=0$, with the Basic Assumption, we assume, without loss of generality,  for each $\lambda\in (0,1)$ that 
\beq\label{fh}
|f(t)|\le M_\lambda e^{-\lambda t},\quad \forall t\ge 0, \text{ for some }M_\lambda>0.
\eeq
\end{enumerate}

For the solution $u(t)$, we summarize the key estimates in section \ref{Gdecay} into the following.

\textbf{Claim.} For any $\alpha\geq0$ and $\delta\in (0,1)$, there exists a positive number $T_*>0$ 
such that $u(t)$ is a regular solution on $[T_*,\infty)$, and one has for $t\ge 0$ that
 \beq\label{us0}
 |u(T_*+t)|_{\alpha+1/2,\sigma_0} \le e^{-(1-\delta) t},
 \eeq
 \beq\label{Blt}
|B(u(T_*+t),u(T_*+t))|_{\alpha,\sigma_0}\le e^{-2(1-\delta) t}.
\eeq

\textit{Proof of {Claim}.}
We apply Proposition \ref{theo23}. By \eqref{fdecay}, we have that \eqref{falphaonly} holds for $\sigma=\sigma_0$, any $\alpha\ge 1/2$ and any $\lambda_0\in(0,1)$. 
 Also, \eqref{fkappa} is satisfied because of \eqref{fh}.
 Therefore \eqref{preus0} and \eqref{preBlt} hold for $\sigma=\sigma_0$, any $\alpha\ge 1/2$ and any $\delta\in(0,1)$. These directly yield \eqref{us0} and \eqref{Blt}.
 
 \bigskip
Returning to the main proof, it suffices to prove that there exist polynomials $q_n$'s for all $n\ge 1$ such that for each $N\ge 1$ the following properties ($\mathcal H1$), ($\mathcal H2$), and ($\mathcal H3$) hold true:
\begin{enumerate}
 \item[($\mathcal H1$)]  $q_N\in \mathcal P^{\infty,\sigma_0}$.
 \item[($\mathcal H2$)] For $\alpha\ge 1/2$,
 \beq\label{remdelta}
\Big |u(t)-\sum_{n=1}^N q_n(t) e^{-nt}\Big |_{\alpha,\sigma_0} =\mathcal O(e^{-(N+\varep)t} )\quad\text{as } t\to\infty,\quad\forall\varep\in(0,\delta_{N,\alpha}^*),
 \eeq
where the numbers $\delta_{n,\alpha}^*$'s, for $\alpha\ge 1/2$, are defined recursively by
\beqs
 \delta_{n,\alpha}^*=\begin{cases}
	       \delta_{1,\alpha},&\text{for }n=1,\\
             \min\{\delta_{n,\alpha},\delta^*_{n-1,\alpha+1/2}\},&\text{for }n\ge 2.
            \end{cases}
\eeqs

 \item[($\mathcal H3$)] The ODE  \eqref{unODE} holds in $E^{\infty,\sigma_0}$ for $n=N$.
 \end{enumerate} 

\bigskip
We prove these statements by constructing the polynomials $q_N(t)$'s recursively.

\medskip

\noindent {\bf Base case: $N=1$.} 
Let $k\geq1$.  By taking $v\in R_kH$ in the the weak formulation \eqref{varform}, we have
\beq\label{rku}
\ddt R_ku + k R_k u = R_k (f(t) - B(u(t),u(t)))
\eeq
in the $R_kH$-valued distribution sense on $(0,\infty)$. 
(Since $R_kH$ is finite dimensional, ``$R_kH$-valued distribution sense," is simply the same as ``distribution sense''.)

Let $w_0(t)=e^t u(t)$ and $w_{0,k}(t)=R_k w_0(t)$. By virtue of the $H_{\rm w}$-continuity of $u(t)$ (see \eqref{lh:wksol}), we have $w_{0,k}\in C([0,\infty),R_kH)$. It follows from \eqref{rku} that
\beq\label{wj}
\ddt w_{0,k} + (k-1) w_{0,k} = R_k f_1 + R_kH_0(t),
\eeq
where
\beq\label{H0def}
H_0(t)=e^t (f(t)-F_1(t) - B(u(t),u(t))),
\eeq
and $F_1$ is defined in \eqref{uF}.  Note that $R_k f_1(t)$ is an $R_kH$-valued polynomial in $t$.

Let $\alpha\ge 1/2$ be fixed. 
Using \eqref{Fcond} for $N=1$ and applying \eqref{Blt} with $\delta=(1-\delta_{1,\alpha})/4$, there are $T_0>0$ and $D_0\ge 1$ such that for $t\ge 0$, 
\beq\label{ch1}
e^t |f(T_0+t)-F_1(T_0+t)|_{\alpha,\sigma_0}\le D_0e^{-\delta_{1,\alpha} t}, 
\eeq
\beq\label{ch2}
e^t |B(u(T_0+t),u(T_0+t))|_{\alpha,\sigma_0}\le e^{(2\delta-1)t}=e^{-(1+\delta_{1,\alpha})t/2}\le e^{-\delta_{1,\alpha} t}.
\eeq
Then, by setting $D_1=D_0+1$,  we have
\beq\label{ch3}
|H_0(T_0+t)|_{\alpha,\sigma_0} \le D_1e^{-\delta_{1,\alpha} t},\quad\forall t\ge0.
\eeq

We will now identify the components of the desired polynomial, $q_1(t)$, belonging to each eigenspace $R_kH$.

\medskip
\noindent\textit{\underline{Case: $k=1$}.}  
Applying Lemma \ref{polylem}(ii) to equation \eqref{wj} with $X=R_1H$, $\|\cdot\|=|\cdot|_{\alpha,\sigma_0}$, $\beta=0$,
$$y(t)=w_{0,1}(T_0+t),\quad p(t)=R_1f_1(T_0+t), \quad g(t)=R_1H_0(T_0+t),$$
we infer that there is an $R_1H$-valued polynomial $q_{1,1}(t)$ such that for any $t\ge 0$
\beqs
|w_{0,1}(T_0+t)-q_{1,1}(t)|_{\alpha,\sigma_0} \le \frac{D_1}{\delta_{1,\alpha}}e^{-\delta_{1,\alpha} t},
\eeqs
thus,
\beq\label{q11}
|R_1 w_0(t)-q_{1,1}(t-T_0)|_{\alpha,\sigma_0} \le \frac{D_1e^{\delta_{1,\alpha} T_0}}{\delta_{1,\alpha}} e^{-\delta_{1,\alpha} t},\quad\forall t\geq T_0.
\eeq

In fact,
\beq\label{q11def}
q_{1,1}(t)= \xi_1+\int_0^t R_1f_1(\tau+T_0)d\tau\quad \text{for some } \xi_1\in R_1H.
\eeq

\medskip
\noindent\textit{\underline{Case: $k\geq2$}.}  We apply Lemma \ref{polylem}(i) to equation \eqref{wj} with 
$$y(t)=w_{0,k}(T_0+t),\quad p(t)=R_kf_1(T_0+t),\quad  g(t)=R_kH_0(T_0+t),$$
where $\beta=k-1>\delta_{1,\alpha}$, and the norm $\|\cdot\|$ being $|\cdot|_{\alpha,\sigma_0}$ on the space $X=R_kH$.
In particular, there is an $R_kH$-valued polynomial, $q_{1,k}(t)$ such that for any $t\ge 0$
\beqs
|w_{0,k}(T_0+t)- q_{1,k}(t)|_{\alpha,\sigma_0} \le e^{-\delta_{1,\alpha} t}\Big(|w_{0,k}(T_0)|_{\alpha,\sigma_0} +|q_{1,k}(0)|_{\alpha,\sigma_0} + \frac {D_1}{k-1-\delta_{1,\alpha}}\Big),
\eeqs
which implies for all $t\ge T_0$ that
\beq\label{q1j}
|w_{0,k}(t)- q_{1,k}(t-T_0)|_{\alpha,\sigma_0} \le e^{-\delta_{1,\alpha} (t-T_0)}\Big(|w_k(T_0)|_{\alpha,\sigma_0} +|q_{1,k}(0)|_{\alpha,\sigma_0} + \frac {D_1}{k-1-\delta_{1,\alpha}}\Big).
\eeq

In fact, for $k\ge 2$
\beq\label{q1k}
q_{1,k}(t)=-e^{(1-k)t}\int_{-\infty}^t e^{(k-1)\tau}R_kf_1(T_0+\tau)d\tau.
\eeq

\noindent{\bf Polynomial $q_1(t)$.} 
Define
\beq\label{q1def}
q_1(t)=\sum_{k=1}^\infty q_{1,k}(t-T_0),\quad t\in\R.
\eeq

We next prove that $q_1\in\mathcal{P}^{\infty,\sigma_0}$. Write
\beqs
f_1(t+T_0)=\sum_{d=0}^m a_d t^d,\quad \text{for some }a_d\in E^{\infty,\sigma_0}.
\eeqs

Clearly, by \eqref{q11def}, $R_1q_1(t+T_0)=q_{1,1}(t)$ is a  $\mathcal V$-valued polynomial, and hence, 
\beq\label{R1q1poly}
\text{the mapping }t\mapsto R_1q_1(t+T_0) \text{ belongs to } \mathcal{P}^{\infty,\sigma_0}.
\eeq

We consider the remaining part $(I-R_1)q_1(t+T_0)$.
Using the integral formula \eqref{id1}, 
\begin{align*}
(I-R_1)q_1(t+T_0)
&=\sum_{k=2}^\infty q_{1,k}(t)
=\sum_{k=2}^\infty -e^{(1-k)t} \int_{-\infty}^t e^{(k-1)\tau}\Big(\sum_{d=0}^m R_k a_d  \tau^d\Big) d\tau\\
&=\sum_{k=2}^\infty \frac{1}{k-1}\sum_{d=0}^m \sum_{n=0}^{d}\frac{(-1)^{d-n}  d!}{n!(k-1)^{d-n}}t^n R_k a_d\\
&=\sum_{k=2}^\infty \frac{1}{k-1}\sum_{n=0}^d\Big(\sum_{d=n}^{m}\frac{(-1)^{d-n}  d!}{n!(k-1)^{d-n}}R_k a_d\Big)t^n.
\end{align*}
Thus, 
\beq\label{IRq1}
(I-R_1)q_1(t+T_0)=\sum_{n=0}^d b_nt^n,
\eeq
where
the coefficient $b_n$, for $0\le n\le d$,  is 
\begin{align*}
b_n=\sum_{k=2}^\infty \frac{1}{k-1}\Big(\sum_{d=n}^{m}\frac{(-1)^{d-n}  d!}{n!(k-1)^{d-n}}R_k a_d\Big).
\end{align*}

For any $\mu\ge 0$, we have
\begin{align*}
|b_n|_{\mu+1,\sigma_0}^2=|Ab_n|_{\mu,\sigma_0}^2
&=\sum_{k=2}^\infty  \Big|\frac{1}{k-1}\sum_{d=n}^m \frac{(-1)^{d-n}  d!}{n!(k-1)^{d-n}} k \cdot R_k a_d\Big |_{\mu,\sigma_0}^2\\
&\le \sum_{k=2}^\infty \frac{k^2}{(k-1)^2} \Big(\sum_{d=n}^m \frac{ m!}{n!} |R_k a_d|_{\mu,\sigma_0}\Big)^2\\
&\le 4\sum_{k=2}^\infty  \frac{ (m!)^2  (m-n+1)^2 }{(n!)^2} |R_ka_d|_{\mu,\sigma_0}^2.
\end{align*}
(Above, we simply used $k/(k-1)\le 2$ for the last inequality.)
Thus,
\beq\label{Abn}
|b_n|_{\mu+1,\sigma_0}^2
\le \frac{ 4(m!)^2  (m-n+1)^2 }{(n!)^2} |(I-R_1)a_d|_{\mu,\sigma_0}^2<\infty.
\eeq

Therefore, $b_n\in E^{\infty,\sigma_0}$ for all $0\le n\le d$,
and, by \eqref{IRq1}, the mapping $t\mapsto (I-R_1)q_1(t+T_0)$ belongs to $\mathcal{P}^{\infty,\sigma_0}$.
This, together with  \eqref{R1q1poly}, implies that $t\mapsto q_1(t+T_0)$ belongs to $\mathcal{P}^{\infty,\sigma_0}$ and, ultimately, that $t\mapsto q_1(t)$ belongs to $\mathcal{P}^{\infty,\sigma_0}$, as desired.

\noindent{\bf Remainder estimate.} We estimate $|u(t)-q_1(t)e^{-t}|_{\alpha,\sigma_0}$ now.
Firstly, inequality \eqref{q11} yields
\beq\label{R1wq1}
|R_1(w_0(t)-q_1(t))|_{\alpha,\sigma_0}=\mathcal O( e^{-\delta_{1,\alpha} t}).
\eeq
Secondly, we have from \eqref{q1j} that
\begin{align*}
&\sum_{k= 2}^\infty |R_k(w_0(t)-q_1(t))|_{\alpha,\sigma_0}^2 \\
&\le 3 e^{2\delta_{1,\alpha} T_0} e^{-2\delta_{1,\alpha} t}\sum_{k=2}^\infty\Big (|w_{0,k}(T_0)|_{\alpha,\sigma_0}^2 +|R_kq_1(T_0)|_{\alpha,\sigma_0}^2+\frac {D_1^2}{(k-1-\delta_{1,\alpha})^2}\Big)\\
&\le D_2^2 e^{-2\delta_{1,\alpha} t}
\end{align*}
for all $t\ge T_0$, where
\beqs
D_2^2 =3 e^{2\delta_{1,\alpha} T_0}\Big\{|(I-R_1)w_0(T_0)|_{\alpha,\sigma_0}^2 + |(I-R_1)q_1(T_0)|_{\alpha,\sigma_0}^2+D_1^2\sum_{k=2}^\infty \frac {1 }{(k-1-\delta_{1,\alpha})^2})\Big\}<\infty.
\eeqs
This implies
\beq\label{wq1}
|(I-R_1)(w_0(t)-q_1(t))|_{\alpha,\sigma_0}\le D_2 e^{-\delta_{1,\alpha} t},\quad  \forall t\ge T_0.
\eeq

Combining \eqref{R1wq1} with \eqref{wq1}  gives
\beqs
|w_0(t)-q_1 (t)|_{\alpha,\sigma_0}= \mathcal O (e^{-\delta_{1,\alpha} t}),
\eeqs
and consequently,
\beq\label{rem1}
|u(t)-q_1 (t) e^{-t}|_{\alpha,\sigma_0}= \mathcal O (e^{-(1+\delta_{1,\alpha})t}).
\eeq
Thanks to \eqref{rem1}, the polynomial $q_1(t)$ is independent of $\alpha$. Hence the same $q_1$ satisfies \eqref{rem1} for all $\alpha\ge 1/2$, which proves ($\mathcal H2$) for $N=1$.

This proves \eqref{remdelta} for $N=1$.

\noindent{\bf Establishing the ODE \eqref{u1ODE}.} 
By \eqref{pode2} in Lemma \ref{polylem}, the polynomial $q_1(t)$ satisfies
\beq\label{pode3}
\ddt R_k q_1(T_0+t)+(k-1)R_kq_1(T_0+t)=R_kf_1(T_0+t),\quad \forall k\ge 1,\quad \forall t\in\R.
\eeq

For each $\mu\ge 0$, we have $Aq_1(T_0+t)$ and $f_1(T_0+t)$ belong to $G_{\mu,\sigma_0}$. Hence, 
we can sum over $k$ in \eqref{pode3} and  obtain
\beqs
\ddt q_1(t)+(A-1)q_1(t)=f_1(t) \quad\text{ in } G_{\mu,\sigma_0},\quad \forall t\in\R,
\eeqs
which implies that the differential equation \eqref{u1ODE} holds in $E^{\infty,\sigma_0}$.

Therefore, $q_1$ satisfies  ($\mathcal H1$), ($\mathcal H2$), and ($\mathcal H3$)  for $N=1$.

\medskip
\noindent{\bf Recursive step.} Let $N\ge1$. Suppose that there already exist $q_1,q_2,\ldots,q_N\in \mathcal P^{\infty,\sigma_0}$ that satisfy  ($\mathcal H2$),
and the ODE \eqref{unODE} holds in $E^{\infty,\sigma_0}$  for each $n=1,2,\ldots,N$.

We will construct a polynomial $q_{N+1}(t)$ that satisfies  ($\mathcal H1$), ($\mathcal H2$), and ($\mathcal H3$)  with $N+1$ replacing $N$.

Let $\alpha\ge 1/2$ be given and $\varep_*$ be arbitrary in $(0,\delta_{N+1,\alpha}^*)$.  
Define 
$$\bar u_N=\sum_{n=1}^N u_n\quad\text{and}\quad v_N=u-\bar u_N.$$
Assumption  ($\mathcal H2$) particularly yields 
\beq\label{vNrate}
| v_N(t)|_{\alpha+1/2,\sigma_0}=\mathcal O(e^{-(N+\varep)t}),\quad \forall \varep\in(0,\delta_{N,\alpha+1/2}^*).
\eeq

Subtracting \eqref{unODE} for $n=1,2,\ldots,N$ from \eqref{fctnse}, we have
\beq\label{uminusuN}
\frac d{dt}v_N+Av_N +B(u,u)- \sum_{m+j\le N} B(u_m,u_j)=f-\sum_{n=1}^N F_n,
\eeq
where the functions $F_n$'s are defined in \eqref{uF}.
We reformulate equation \eqref{uminusuN} as
\beq\label{vNeq}   
\frac d{dt}v_N+Av_N +\sum_{m+j=N+1} B(u_m,u_j)=F_{N+1}+h_N,
\eeq
where 
\begin{align*}
h_N&=-B(u,u)+\sum_{\stackrel{1\le m,j\le N}{m+j\le  N+1}} B(u_m,u_j)+f-\sum_{n=1}^{N+1} F_n\\
&=-\Big\{B(u,u)-B(\bar u_N,\bar u_N)\Big\}- \Big\{B(\bar u_N,\bar u_N)-\sum_{\stackrel{1\le m,j\le N}{m+j\le  N+1}} B(u_m,u_j)\Big\}+\Big\{f-\sum_{n=1}^{N+1} F_n\Big\}.
\end{align*} 
With this way of grouping, we rewrite $h_N$ as
\beq
\label{hdef}
h_N=-B(v_N,u)-B(\bar u_N,v_N)-\sum_{\stackrel{1\le m,j\le N}{m+j\ge  N+2}} B(u_m,u_j)+\tilde F_{N+1},
\eeq 
where
\beqs
\tilde F_{N+1}(t)=f(t)-\sum_{n=1}^{N+1} F_n(t).
\eeqs
Note in case $N=1$ that neither of the following terms 
$$ \sum_{m+j\le N} B(u_m,u_j)\text{ in \eqref{uminusuN}
 nor }\sum_{\stackrel{1\le m,j\le N}{m+j\ge  N+2}} B(u_m,u_j)\text{ in \eqref{hdef} will appear.}$$

\noindent{\bf Estimate of $h_N(t)$.}
By \eqref{Fcond} and Remark \ref{betterem}, we have
\beq \label{Ftil}
|\tilde F_{N+1}(t)|_{\alpha,\sigma_0}=\mathcal O(e^{-(N+1+\delta_{N+1,\alpha})t})
=\mathcal O(e^{-(N+1+\varep_*)t}).
\eeq

 It is also obvious that
\beq \label{Bmj}
\sum_{\stackrel{1\le m,j\le N}{m+j\ge  N+2}} |B(u_m,u_j)|_{\alpha,\sigma_0}
= \sum_{\stackrel{1\le m,j\le N}{m+j\ge  N+2}} e^{-(m+j)t}|B(q_m,q_j)|_{\alpha,\sigma_0} 
=\mathcal O(e^{-(N+1+\varep_*)t}).
\eeq

Take $\varep\in(\varep_*,\delta_{N+1,\alpha}^*)\subset (0,\delta_{N,\alpha+1/2}^*)$ in \eqref{vNrate}, and set $\delta=\varep-\varep_*\in(0,1)$.
Then we have from the definition of $u_n(t)$ and \eqref{us0} that 
\beq \label{ubu}
|\bar u_N(t)|_{\alpha+1/2,\sigma_0},|u(t)|_{\alpha+1/2,\sigma_0}=\mathcal O(e^{-(1-\delta)t}).
\eeq

By Lemma \ref{nonLem} and estimates \eqref{vNrate}, \eqref{ubu}, it follows that
\beq \label{BvNu}
|B(v_N,u)|_{\alpha,\sigma_0},|B(\bar u_N,v_N)|_{\alpha,\sigma_0}=\mathcal O(e^{-(N+\varep+1-\delta)t})=\mathcal O(e^{-(N+1+\varep_*)t}).\eeq

Therefore, by \eqref{hdef}, \eqref{Ftil}, \eqref{Bmj} and \eqref{BvNu},
\beq\label{hNo}
|h_N(t)|_{\alpha,\sigma_0}=\mathcal O(e^{-(N+1+\varep_*)t}).
\eeq

{\flushleft{\bf Construction of  $q_{N+1}(t)$.}}
Using the weak formulation of \eqref{vNeq}, which is similar to \eqref{varform}, and then taking the test function, $v$, to be in $R_kH$, we obtain 
\beq\label{vNeq1}   
\frac d{dt}R_kv_N+kR_k v_N +\sum_{m+j=N+1} R_k B(u_m,u_j)=R_k F_{N+1}+ R_k h_N \text{ on } (0,\infty).
\eeq

Let $w_{N}(t)=e^{(N+1)t} v_N(t)$ and $w_{N,k}=R_k w_N(t)$.
Using \eqref{vNeq1}, we write the ODE for $w_{N,k}$ as
\beq\label{wNeq}   
\frac d{dt}w_{N,k}+(k-(N+1))w_{N,k}=\Big(R_k{f}_{N+1}-\sum_{m+j=N+1} R_k B(q_m,q_j)\Big)+ H_{N,k},
\eeq
with
$H_{N,k}(t)= e^{(N+1)t}R_k h_N(t)$. 
Note from \eqref{hNo} that
\beqs
|H_{N,k}(t)|_{\alpha,\sigma_0}=\mathcal O(e^{-\varep_* t}).
\eeqs
Then there exist $T_N>0$ and $D_3>0$ such that
\beq\label{HNkD3}
|H_{N,k}(T_N+t)|_{\alpha,\sigma_0}\le D_3 e^{-\varep_* t},\quad \forall t\geq0.
\eeq

By the first property in \eqref{lh:wksol}, each $w_{N,k}(t)$ is continuous from $[0,\infty)$ to $R_kH$.

We apply Lemma \ref{polylem} to equation \eqref{wNeq} on $(T_N,\infty)$ with space $X=R_kH$,  norm $\|\cdot\|=|\cdot|_{\alpha,\sigma_0}$,
solution $y(t)=w_{N,k}(T_N+t)$, constant $\beta=k-(N+1)$, polynomial 
\begin{align*}
 p(t)&=R_k{f}_{N+1}(T_N+t)-\sum_{m+j=N+1} R_k B(q_m(T_N+t),q_j(T_N+t)),
 \end{align*}
and function $g(t)=H_{N,k}(T_N+t)$ which satisfies the estimate \eqref{HNkD3}.

\medskip
\noindent\textit{\underline{Case $k=N+1$}.}
We have $\beta=0$, then Lemma \ref{polylem}(ii) implies that there is a polynomial  $q_{N+1,N+1}(t)$ valued in $R_{N+1}H$ such that 
\beqs
|w_{N,N+1}(T_N+t)- q_{N+1,N+1}(t)|_{\alpha,\sigma_0}= \mathcal O (e^{-\varep_* t}).
\eeqs
Thus,
\beq \label{form0}
|R_{N+1}w_{N}(t)- q_{N+1,N+1}(t-T_N)|_{\alpha,\sigma_0}= \mathcal O (e^{-\varep_* t}).
\eeq

\medskip
\noindent\textit{\underline{Case $k\le N$}.} Note that $\beta<0$ and by \eqref{vNrate} we have
$$\lim_{t\to\infty}( e^{\beta t}y(t) ) = \lim_{t\to\infty}e^{\beta (t-T_N)} w_{N,k}(t)= e^{-\beta T_N}\lim_{t\to\infty}e^{kt}R_kv_N(t)=0.$$

Then, by applying Lemma \ref{polylem}(iii),  there is a polynomial  $q_{N+1,k}(t)$ valued in $R_kH$ such that 
\beqs
|w_{N,k}(T_N+t)- q_{N+1,k}(t)|_{\alpha,\sigma_0}=\mathcal O(e^{-\varep_* t}),
\eeqs
hence,
\beq\label{form1}
|R_k w_{N}(t)- q_{N+1,k}(t-T_N)|_{\alpha,\sigma_0}=\mathcal O(e^{-\varep_* t}). 
\eeq

\medskip
\noindent\textit{\underline{Case $k\ge N+2$}.}
Similarly, $\beta=k-N-1>\varep_*$ and, by  using Lemma \ref{polylem}(i), there is a polynomial  $q_{N+1,k}(t)$ valued in $R_kH$ such that for $t\ge 0$,
\beqs
|w_{N,k}(T_N+t)
- q_{N+1,k}(t)|_{\alpha,\sigma_0} 
\le \Big(|R_k v_N(T_N)|_{\alpha,\sigma_0} + |q_{N+1,k}(0)|_{\alpha,\sigma_0} + \frac{D_3}{k-(N+1)-\varep_*}\Big) e^{-\varep_* t}.
\eeqs
Thus
\begin{multline}\label{form2}
|R_kw_{N}(t)
- q_{N+1,k}(t-T_N)|_{\alpha,\sigma_0} \\
\le e^{\varep_* T_N} \Big(|R_k v_N(T_N)|_{\alpha,\sigma_0} + |q_{N+1,k}(0)|_{\alpha,\sigma_0} + \frac{D_3}{k-(N+1)-\varep_*}\Big) e^{-\varep_* t},
\quad\forall t\ge T_N. 
\end{multline}

\medskip
We define $$q_{N+1}(t)=\sum_{k=1}^\infty q_{N+1,k}(t-T_N), \quad t\in \R.$$

It follows from \eqref{forcexpand} that $f_{N+1}\in \mathcal{P}^{\infty,\sigma_0}$, and from the recursive assumptions that
$q_m, q_j\in \mathcal{P}^{\infty,\sigma_0}$ for $1\le m,j\le N$; then by \eqref{BEE}, we have $f_{N+1}-\sum_{m+j=N+1} B(q_m,q_j)\in \mathcal{P}^{\infty,\sigma_0}$. Following the same proof that shows  $q_1\in\mathcal{P}^{\infty,\sigma_0}$, we argue similarly to show that $q_{N+1}\in\mathcal{P}^{\infty,\sigma_0}$.

\medskip
\noindent{\bf Estimate of $v_{N+1}(t)$.}
From \eqref{form0} and \eqref{form1}, we immediately have
\beq\label{PNw}
|P_{N+1}(w_N(t)-q_{N+1}(t))|_{\alpha,\sigma_0} = \mathcal O(e^{-\varep_* t}).
\eeq

Squaring \eqref{form2} and summing over $k\ge N+2$, we obtain for $t\ge T_N$ that
\begin{align*}
&|(I-P_{N+1})(w_N(t)-q_{N+1}(t))|_{\alpha,\sigma_0}^2 = \sum_{k=N+2}^\infty |R_k (w_N(t)-q_{N+1}(t))|_{\alpha,\sigma_0}^2 \\
&\le 3 e^{2\varep_* T_N} \Big(\sum_{k=N+2}^\infty|R_k v_N(T_N)|_{\alpha,\sigma_0}^2 + \sum_{k=N+2}^\infty|R_kq_{N+1}(T_N)|_{\alpha,\sigma_0}^2 \\
&\quad + \sum_{k=N+2}^\infty\frac{D_3^2}{(k-(N+1)-\varep_*)^2}\Big)e^{-2\varep_* t}. 
\end{align*}

Since the last three sums are convergent, we deduce 
\beq\label{QNw}
|(I-P_{N+1})(w_N(t)-q_{N+1}(t))|_{\alpha,\sigma_0} = \mathcal O(e^{-\varep_* t}).
\eeq

From \eqref{PNw} and \eqref{QNw}, we have 
\beq\label{wNqN1}
|w_N(t)-q_{N+1}(t)|_{\alpha,\sigma_0}= \mathcal O(e^{-\varep_*t}).
\eeq
Therefore,
\beq \label{vN1ep}
|v_{N+1}(t)|_{\alpha,\sigma_0}
=|v_N(t)-e^{-(N+1)t} q_{N+1}(t)|_{\alpha,\sigma_0}= \mathcal O(e^{-(N+1+\varep_*)t}).
\eeq

Thanks to \eqref{wNqN1}, the polynomial $q_{N+1}(t)$ is independent of $\alpha$ and $\varep_*$. Therefore, \eqref{vN1ep} holds for any $\alpha\ge1/2$ and $\varep_*\in(0,\delta_{N+1}^*)$, which proves  ($\mathcal H2$) with $N+1$ replacing $N$.


\medskip
\noindent{\bf Establishing the ODE \eqref{unODE} for $n=N+1$.}
By our construction of the polynomials $q_{N+1,k}(t)$ above, and   
by \eqref{pode2} in Lemma \ref{polylem}, the polynomial $q_{N+1}(t)$ satisfies
\begin{align*} 
&\ddt R_k q_{N+1}(T_N+t)+(k-(N+1))R_kq_{N+1}(T_N+t) \\
&\quad =R_kf_{N+1}(T_N+t)- \sum_{m+j=N+1} R_k B(q_m(T_N+t),q_j(T_N+t)) 
\quad \forall k\ge 1,\quad \forall t\in\R.
\end{align*}
This yields,  for each $k\ge 1$,
\beq\label{Rkueq}
\ddt R_k u_{N+1}(t)+AR_ku_{N+1}(t) + \sum_{m+j=N+1} R_k B(u_m(t),u_j(t)) =R_kF_{N+1}(t),\quad \forall t\in\R.
\eeq

For any $\mu\ge 0$, since  $Au_{N+1}(t)$, $\sum_{m+j=N+1} B(u_m(t),u_j(t))$, $F_{N+1}(t)$ each is a $G_{\mu,\sigma_0}$-valued polynomial, then summing up equation \eqref{Rkueq} in $k$ gives 
\beqs
\ddt u_{N+1}+Au_{N+1}+ \sum_{m+j=N+1} B(u_m,u_j)=F_{N+1} \quad \text{in } G_{\mu,\sigma_0}.
\eeqs
Thus, the ODE \eqref{unODE} holds in $E^{\infty,\sigma_0}$  for $n=N+1$.

We have  established the existence of the desired polynomial, $q_{N+1}(t)$, which completes the recursive step, and hence, the proof of Theorem \ref{mainthm}.
\qed

\begin{remark}
By using the extra information \eqref{fep} in Remark \ref{betterem}, we can prove directly the remainder estimate \eqref{remdelta} for any $\varep\in(0,1)$, which, in fact, is expected by \eqref{uqa}.
Nonetheless, the above proof with specific $\varep\in(0,\delta^*_{N,\alpha})$  is more flexible and will  be easily adapted in section \ref{sec43} below to serve the proof of Theorem \ref{finitetheo}.
\end{remark}

\subsection{Proof of Corollary \ref{Vcor}}\label{sec42}
We follow the proof of Theorem \ref{mainthm}. Since $f_1\in \mathcal V$, there is $N_1\ge 1$ such that $f_1\in P_{N_1}H$. As a consequence, we see from \eqref{q1k} that $q_{1,k}=0$ for $k>N_1$.
Hence, $q_1(t)=\sum_{k=1}^{N_1} q_{1,k}(t-T_0)$ is a polynomial in $P_{N_1}H$.

For the recursive step, the functions $f_{N+1}$, $q_m$ and $q_j$ ($1\le m,j\le N$), in this case, are $\mathcal{V}$-valued polynomials.
Hence, by \eqref{BVV}, so is $f_{N+1}-\sum_{m+j=N+1} B(q_m,q_j)$. It follows that there are at most finitely many $k$ such that $q_{N+1,k}$ is nonzero.  Since each $q_{N+1,k}$ is an $\mathcal{V}$-valued polynomial, clearly $q_{N+1}$, as a finite sum of those, is a $\mathcal{V}$-valued polynomial.\qed

\subsection{Proof of Theorem \ref{finitetheo}}\label{sec43}
We follow the proof of Theorem \ref{mainthm} closely and make necessary modifications.
We prove part (i), while the proof of part (ii) is entirely similar to that in Theorem \ref{mainthm} and thus omitted.

The same notation $u_n(t)$, $F_n(t)$, $v_n(t)$, $\bar u_n(t)$ as in Theorem \ref{mainthm} is used here.

By \eqref{ffinite} with $N=1$,
\beq\label{ff1}
e^t |f(t)-F_1(t)|_{\alpha_*,\sigma_0}=\mathcal O(e^{-\delta_{1,\alpha_*} t}).
\eeq

Also, 
\beq\label{ffirst}
|f(t)|_{\alpha_*,\sigma_0}\le |f_1(t)|_{\alpha_*,\sigma_0}e^{-t}+|f(t)-f_1(t)e^{-t}|_{\alpha_*,\sigma_0}=\mathcal O(e^{-\lambda t}),\quad \forall \lambda\in(0,1).
\eeq

Using \eqref{ffirst} and by applying Proposition \ref{theo23}, we have
\beq \label{newua}
|u(t)|_{\alpha_*+1/2,\sigma_0}= \mathcal O(e^{-(1-\delta) t}),\quad \forall \delta\in(0,1),
\eeq 
\beq\label{newBu}
|B(u(t),u(t))|_{\alpha_*,\sigma_0}=\mathcal O(e^{-2(1-\delta) t}),\quad \forall \delta\in(0,1).
\eeq

\medskip
\noindent{\bf Base case: $N=1$.} 
Let $\alpha=\alpha_*$ and $\mu=\mu_*$.

In estimating $H_0(t)$ defined by \eqref{H0def}, the estimate \eqref{ch1}, resp., \eqref{ch2}, comes from \eqref{ff1}, resp. \eqref{newBu}.
Hence we obtain the bound \eqref{ch3} for $|H_0(T_0+t)|_{\alpha,\sigma_0}$.
Then the existence and definition of $q_1(t)$ are the same as in \eqref{q11def}, \eqref{q1k} and \eqref{q1def}.

Since $f_1\in \mathcal P^{\mu,\sigma_0}$, then the same proof gives 
 $q_1\in\mathcal P^{\mu+1,\sigma_0}$, see \eqref{R1q1poly}, \eqref{IRq1} and \eqref{Abn}. 
 
The remainder estimate \eqref{rem1} still holds true, which, for the current value $\alpha=\alpha_*$,  proves \eqref{ufinite} for $N=1$.
Also, the ODE \eqref{u1ODE} holds in $G_{\mu,\sigma_0}$ (for the current $\mu=\mu_*$).

If $N_*=1$, then the proof is finished here. We consider $N_*\ge 2$ now. 

\medskip
\noindent{\bf Recursive step.} Let $1\le N\le N_*-1$.
Assume there already exist $q_n\in \mathcal P^{\mu_n,\sigma_0}$ for $1\le n\le N$, such that
 \beq\label{farate0}
 | v_N(t)|_{\alpha_N,\sigma_0}=\mathcal O(e^{-(N+\varep)t}),\quad\forall \varep\in(0,\delta_N^*),
 \eeq
and \eqref{unODE} holds in $ G_{\mu_N,\sigma_0}$ for $n=1,2,3,\ldots,N$.

Let
\beq\label{almu}
\alpha=\alpha_{N+1}=\alpha_N-1/2\ge 1/2, \quad \mu=\mu_{N+1}=\mu_N-1/2\ge \alpha\ge 1/2.
\eeq

Note for $n=1,2,\ldots,N$ that $\mu_n\ge \alpha_n\ge 1/2$ and both $\mu_n$, $\alpha_n$ are decreasing, 
hence,  
\beq \label{uqn}
u_n(t),q_n(t)\in G_{\mu_n,\sigma_0} \subset G_{\mu_N,\sigma_0}=G_{\mu+1/2,\sigma_0} \subset G_{\alpha_N,\sigma_0}=G_{\alpha+1/2,\sigma_0},\quad\forall t\in\R.\eeq

Rewrite \eqref{farate0}  as 
\beq\label{favNrate}
| v_N(t)|_{\alpha+1/2,\sigma_0}=\mathcal O(e^{-(N+\varep)t}),\quad \forall \varep\in(0,\delta_N^*).
\eeq

We now construct a polynomial $q_{N+1}\in\mathcal P^{\mu+1,\sigma_0}$ such that 
\eqref{ufinite} holds true with $N+1$ replacing $N$, and the ODE \eqref{unODE}, with $n=N+1$, holds in $G_{\mu_{N+1},\sigma_0}=G_{\mu,\sigma_0}$. 

We proceed with the construction of $q_{N+1}(t)$ as in the proof of Theorem \ref{mainthm} using the specific values of $\alpha$ and $\mu$ in \eqref{almu}.

Note that equation \eqref{vNeq} for $v_N$ still holds in the weak sense as in Definition \ref{lhdef}.

\medskip
$\bullet$ We check the estimate \eqref{hNo} for the function $h_N(t)$ defined by \eqref{hdef}.

Let $\varep_*\in(0,\delta^*_{N+1})$.
By \eqref{ffinite} with $N+1$ replacing $N$, we have
\beq \label{faFtil}
|\tilde F_{N+1}(t)|_{\alpha,\sigma_0}=\mathcal O(e^{-(N+1+\delta_{N+1})t})
=\mathcal O(e^{-(N+1+\varep_*)t}).
\eeq

Thanks to \eqref{uqn} and Lemma \ref{nonLem}, estimate \eqref{Bmj} stays the same as
\beq \label{faBmj}
\sum_{\stackrel{1\le m,j\le N}{m+j\ge  N+2}} |B(u_m,u_j)|_{\alpha,\sigma_0}
=\mathcal O(e^{-(N+1+\varep_*)t}).
\eeq

Again, take $\varep\in(\varep_*,\delta^*_{N+1})\subset (0,\delta^*_N)$ in \eqref{favNrate} and $\delta=\varep-\varep_*\in(0,1)$.
Then we have
\beq \label{faubu}
|\bar u_N(t)|_{\alpha+1/2,\sigma_0}=|\bar u_N(t)|_{\alpha_N,\sigma_0}=\mathcal O(e^{-(1-\delta)t}),
\eeq
and by \eqref{newua}
\beqs
|u(t)|_{\alpha+1/2,\sigma_0}\le |u(t)|_{\alpha_*,\sigma_0}=\mathcal O(e^{-(1-\delta)t}).
\eeqs

By Lemma \ref{nonLem} and estimates \eqref{favNrate}, \eqref{faubu}, it follows that
$$|B(v_N,u)|_{\alpha,\sigma_0},|B(\bar u_N,v_N)|_{\alpha,\sigma_0}=\mathcal O(e^{-(N+\varep+1-\delta)t})=\mathcal O(e^{-(N+1+\varep_*)t}).$$
Therefore, by \eqref{faFtil}, \eqref{faBmj} and \eqref{faubu}, we again obtain \eqref{hNo}.

\medskip
$\bullet$  The same construction of  $q_{N+1}(t)$ now goes through. Indeed, since $f_{N+1}\in \mathcal P^{\mu,\sigma_0}$, by \eqref{uqn} and the fact that  $q_m, q_j\in \mathcal P^{\mu+1/2,\sigma_0}$ for $1\le m,j\le N$, it then follows from \eqref{BGG} that
$$f_{N+1}-\sum_{m+j=N+1} B(q_m,q_j)\in \mathcal P^{\mu,\sigma_0}.$$

The same proof as for the case of $q_1$ then yields that  $q_{N+1}\in\mathcal P^{\mu+1,\sigma_0}$.

\medskip
$\bullet$ 
For the estimate of $v_{N+1}(t)$, same arguments yield
\beqs
|v_{N+1}(t)|_{\alpha,\sigma_0}
=|v_N(t)-e^{-(N+1)t} q_{N+1}(t)|_{\alpha,\sigma_0}= \mathcal O(e^{-(N+1+\varep_*)t}). 
\eeqs

Since this holds for any $\varep_*\in(0,\delta_{N+1}^*)$, the remainder estimate \eqref{ufinite} holds true with $N+1$ replacing $N$.


\medskip
$\bullet$ As for the ODE \eqref{unODE} with $n=N+1$, the proof is unchanged from that of Theorem \ref{mainthm}, noting that the ODE now holds in the corresponding space $G_{\mu,\sigma_0}$.

\medskip
We have proved that the polynomial $q_{N+1}$ has the desired properties. This completes the recursive step, and hence, the proof of Theorem \ref{finitetheo}.\qed

\section*{}
\noindent\textbf{\large Acknowledgements.} 
The authors would like to thank Peter Constantin for prompting the question of extending the Foias-Saut theory to the case of non-potential forces. The authors are also grateful to Ciprian Foias for his encouragement towards this work and helpful discussions,
as well as to Animikh Biswas for stimulating discussions. L.H. gratefully acknowledges the support for his research by the NSF grant DMS-1412796

\bibliography{paperbaseall}{}
\bibliographystyle{abbrv}

\end{document}